\DeclareSymbolFont{cyrletters}{OT2}{wncyr}{m}{n}
\DeclareMathSymbol{\Sha}{\mathalpha}{cyrletters}{"58}
\theoremstyle{plain}
\newtheorem{theorem}{Theorem}[section]
\newtheorem{theorem*}{Theorem}
\newtheorem{proposition}[theorem]{Proposition}
\newtheorem*{proposition*}{Proposition}
\newtheorem{lemma}[theorem]{Lemma}
\newtheorem{corollary}[theorem]{Corollary}
\newtheorem{question}[theorem]{Question}
\newtheorem*{claim*}{Claim}
\theoremstyle{definition}
\newtheorem{example}[theorem]{Example}
\newtheorem{remark}[theorem]{Remark}
\newtheorem{definition}[theorem]{Definition}
\newtheorem*{definition*}{Definition}
\def\F{\mathbb{F}}
\def\Z{\mathbb{Z}}
\def\P{\mathbb{P}}
\date{\today}
\thanks{
\textit{Mathematics Subject Classification} (2020): 11G20, 14H30(primary); 13A35, 14B20, 14D10, 14H25 (secondary). \\
\textit{Key words and phrases.} Galois covers of curves, lifting problem, curves over complete discrete valuation rings, Hurwitz trees, elementary abelian $p$-groups.
}
\title{Lifting Elementary Abelian Covers of Curves}
\author{Jianing Yang}
\begin{document}

\maketitle

\begin{abstract}
Given a Galois cover of curves $f$ over a field of characteristic $p$, the lifting problem asks whether there exists a Galois cover over a complete mixed characteristic discrete valuation ring whose reduction is $f$. In this paper, we consider the case where the Galois groups are elementary abelian $p$-groups. We prove a combinatorial criterion for lifting an elementary abelian $p$-cover, dependent on the branch loci of lifts of its $p$-cyclic subcovers. We also study how branch points of a lift coalesce on the special fiber. Finally, we analyze lifts for several families of $(\Z/2)^3$-covers of various conductor types, both with equidistant branch locus geometry and non-equidistant branch locus geometry.
\end{abstract}


\section{INTRODUCTION}
Given a smooth curve over a field $k$ of characteristic $p$, we can study its lift to characteristic \nolinebreak$0$, which is a smooth (relative) curve over a mixed characteristic complete discrete valuation ring $R$ with residue field $k$. Moreover, if we let a finite group act on the curve in characteristic $p$ and take the quotient, we obtain a Galois cover of such curves. The Lifting Problem asks: given a Galois cover of smooth curves in characteristic $p$, $X\xrightarrow{G}\P^1_k$, when can we lift it to characteristic $0$? Which groups can be realized as Galois groups of covers that lift? One famous result in the area is the Oort conjecture, which states that all cyclic covers lift. This topic is also related to the Inverse Galois Problem, deformation theory, étale fundamental groups, and patching.

The focus of this paper is on the elementary abelian case, i.e., on $(\Z/p)^n$-covers of smooth projective curves. It is known that some of them lift, while some of them do not (see Example \ref{eleabe}), but results about when they lift are very incomplete. The main result of this paper, which generalizes Barry Green and Michel Matignon's criterion for lifting $\Z/p\times\Z/p$-covers \cite{GM}, applies to all elementary abelian $p$-covers of $\P^1_k$, where \nolinebreak$k$ is an algebraically closed field of characteristic $p$. I show the following branch cycle criterion, a precise version of which will be stated in Section $3$ (Theorem \ref{branchcyclecriterion}).

\begin{theorem}[Imprecise version]
Let $C: X\to\P^1_k$ be a $(\Z/p)^n$-Galois cover, and $m_1+1\leq \cdots\leq m_n+1$ be the conductors of its $n$ generating $\Z/p$-subcovers. Then $C$ can be lifted to characteristic \nolinebreak$0$ if and only if $m_i\equiv -1$ mod $p^{n-i}$ for $1\leq i\leq n-1$ and these $\Z/p$-subcovers can be respectively lifted with branch loci $B_1,\ldots,B_{n}$ that satisfy a certain combinatorial criterion.
\end{theorem}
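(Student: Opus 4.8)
The plan is to reduce the global lifting problem for $C$ to a purely local one at its branch points and then attack the local problem through Hurwitz trees. Because $k$ is algebraically closed and $\P^1_k$ lifts canonically to $\P^1_R$ with deformations unobstructed away from the branch locus, the local-global principle for lifting covers of curves reduces the question to the following: for each branch point $b$ of $C$, with (elementary abelian) inertia group $I_b\leq(\Z/p)^n$, does the induced local $I_b$-extension of $k[[t]]$ lift to characteristic $0$? Thus I may assume $C$ is totally ramified above a single point and work with local $(\Z/p)^n$-extensions, the numbers $m_i+1$ being the conductors at $b$ of the chosen generating $\Z/p$-quotients.

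Next I would invoke the encoding of local lifts by Hurwitz trees: a lift of a local $G$-extension corresponds to a Hurwitz tree $\mathcal{T}$ — a metric tree with a root at the origin of the disc, leaves at the prospective branch points of the lift, edges weighted by positive rational thicknesses, and vertices decorated by differential/Artin--Schreier data solving a deformation equation and a differential equation at every vertex. For $G=(\Z/p)^n$ this decoration surjects onto Hurwitz-tree data for each of the $\tfrac{p^n-1}{p-1}$ index-$p$ subgroups, and since $G$ is elementary abelian it is conversely determined by the $n$ compatible Hurwitz trees of the generating $\Z/p$-quotients together with the combinatorial record of how their leaf sets are positioned inside a common disc; the leaves of the $i$-th quotient's tree form a branch divisor $B_i$ of degree $m_i+1$ on the generic fibre.

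For necessity I read the different off $\mathcal{T}$ edge by edge. At the root the $i$-th quotient has conductor $m_i+1$; descending the chain $G=G_0\supset G_1\supset\cdots\supset G_n=0$, the branching freshly created on the $j$-th exceptional component of the semistable model must match the reduction of an honest characteristic-$0$ $\Z/p$-cover of that component, hence have conductor prime to $p$, and propagating these integrality constraints through the $n$ levels forces $m_i\equiv-1\pmod{p^{\,n-i}}$ for $1\leq i\leq n-1$. At the same time the divisors $B_1,\dots,B_n$ extracted from $\mathcal{T}$ must satisfy the compatibility conditions of Theorem \ref{branchcyclecriterion}: the Hurwitz-tree datum of every other index-$p$ subgroup $H$ is an $\F_p$-linear combination of those of the generators, and demanding that each such combination again be an effective, integral Hurwitz tree translates exactly into the asserted conditions on the relative $p$-adic positions of the points of the $B_i$.

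For sufficiency I run this in reverse: given lifts $\mathcal{C}_i\to\P^1_R$ of the $n$ generating $\Z/p$-subcovers (which exist by the Oort conjecture) with branch loci $B_i$ obeying the criterion and the congruences in force, I build a type-$(\Z/p)^n$ Hurwitz tree $\mathcal{T}$ as the common refinement of the metric trees of the $\mathcal{C}_i$, verify using the criterion that the induced vertex data solves the deformation and differential equations — the congruences ensuring that the conductors on the exceptional components are prime to $p$, so the auxiliary $\Z/p$-covers really exist — and then pass from $\mathcal{T}$ to a deformation by gluing the covers of the formal discs and annuli cut out by $\mathcal{T}$, obtaining a lift of $C$. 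The main obstacle is precisely this step: the naive fibre product $\mathcal{C}_1\times_{\P^1_R}\cdots\times_{\P^1_R}\mathcal{C}_n$ essentially never has good reduction — even when all the $B_i$ specialize to the one point of $\P^1_k$, the semistable model of the composite sprouts extra components and its special fibre may be singular or carry the wrong ramification filtration unless the pairwise $p$-adic distances within $\bigcup_i B_i$ are finely calibrated. Identifying which configurations survive, and proving the stated criterion captures all of them, demands a tight dictionary between valuations of differences of branch points and ramification data on the stable model; this is where the Hurwitz-tree bookkeeping, together with the $\F_p$-linear algebra tying the $\tfrac{p^n-1}{p-1}$ subcovers to the $n$ chosen generators, carries the argument, and it also shows that $B_1,\dots,B_n$ cannot be chosen independently — once $B_1,\dots,B_{n-1}$ are fixed the criterion sharply constrains $B_n$ — so the ``if'' direction must produce a genuinely compatible system of lifts, generalizing Green--Matignon's $(\Z/p)^2$ construction \cite{GM}.
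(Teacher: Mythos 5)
Your proposal takes a genuinely different route from the paper --- Hurwitz trees with differential data rather than the Green--Matignon different criterion --- but it has a real gap exactly where you flag ``the main obstacle.'' For sufficiency you propose to build a decorated tree $\mathcal{T}$ from the branch loci $B_1,\dots,B_n$ and then ``pass from $\mathcal{T}$ to a deformation by gluing.'' For a general group (and in particular for $(\Z/p)^n$ with $n\geq 2$) the existence of an admissible Hurwitz tree is only known to be a \emph{necessary} condition for lifting; the converse gluing step is not available off the shelf, and you do not supply the argument that the vertex data coming from the $n$ given $\Z/p$-lifts actually solves the deformation and differential equations at every vertex. So the ``if'' direction does not close. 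The paper avoids this entirely: it takes the lifts of the generating $\Z/p$-subcovers (which exist by \cite{OSS}), forms the normalization of their compositum over $R[[T]]$, computes its generic different by inclusion--exclusion on the $B_i$ (Lemma~\ref{genericdifferent}, which is where the combinatorial hypothesis enters), computes the special different from the lower ramification jumps (Lemmas~\ref{ramjumps} and~\ref{higherconductorsp}), observes that the two agree, and invokes the different criterion of \cite{GM} to conclude good reduction. Sufficiency is thereby reduced to a finite computation with no tree-realization step.

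Your necessity direction is closer in spirit but also under-justified at the key points. The congruences $m_i\equiv -1 \pmod{p^{n-i}}$ do not come from conductors on exceptional components being prime to $p$; in the paper they fall out for free because the induction (base case $r=2$ from \cite{GM}, inductive step again by equating generic and special differents level by level) produces the exact count $|B_1\cap\cdots\cap B_{l+1}|=(m_1+1)(p-1)^l/p^l$, and integrality of this cardinality forces the congruence since $\gcd((p-1)^l,p^l)=1$. Likewise, your claim that effectivity of the $\F_p$-linear combinations of tree data ``translates exactly'' into the stated intersection cardinalities is the assertion to be proved, not a proof. If you want to salvage the Hurwitz-tree route, you would need (a) a realization theorem for admissible $(\Z/p)^n$-trees, which is open in this generality, and (b) an explicit dictionary between tree effectivity and the intersection numbers; the different-criterion computation is what substitutes for both.
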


Moreover, I relate the $p$-rank stratification of the Artin-Schreier space to a stratification of the characteristic $0$ Hurwitz space by the branch locus coalescing behavior of $p$-cyclic covers in characteristic $0$ (Section \ref{coalesce}). I also classify all admissible Hurwitz trees for certain types of $(\Z/2)^3$-covers (Section \ref{444}). Finally, I construct explicit lifts for a new family of $(\Z/2)^3$-covers, with non-equidistant geometry (Section \ref{442r}), providing the first example with non-constant conductor type for this group.

This paper is adapted from my dissertation at University of Pennsylvania. I would like to thank my advisor David Harbater for his guidance.

\section{The Lifting Problem and Oort Groups}
Throughout this paper, we let $k$ be an algebraically closed field of characteristic $p$, $R$ be a finite extension of $W(k)$, the ring of Witt vectors over $k$, and $K$ be the fraction field of $R$. We always allow finite extensions of $R$ if necessary. Let $\pi$ be a uniformizer of $R$, and $v$ be the valuation on $R$ with respect to $\pi$. A \textit{curve} is assumed to be reduced, smooth, connected, and projective unless stated otherwise. A $G$-(Galois) \textit{cover of curves} $X\to Y$ is a finite, generically separable morphism such that the group of automorphisms $\mathrm{Aut}_Y(X)$ is isomorphic to $G$, and acts transitively on each geometric fiber.

\subsection{The Global Lifting Problem}
We can state the (global) lifting problem as follows:

\begin{question}[\textbf{The global lifting problem}]
Let $f: X_k\xrightarrow{G} \P^1_k$ be a Galois branched cover of smooth projective curves. Does there exist some $R$ as above,  together with a Galois cover $X_{R}\xrightarrow{G}\P^1_{R}$ of smooth projective $R$-curves whose special fiber is $f$? If the answer is yes, we say that $f$ \textit{lifts}.
\end{question}

\begin{remark}
A smooth projective curve always lifts over any complete discrete valuation ring $R$ with residue field $k$ \cite[III, Corollaire 6.10 and Proposition 7.2]{SGA03}. However, simply taking the equation defining $X_k$, lifting its coefficients to $R$ does not always work, since there may not be a $G$-action on $X_R$ that reduces to the one on $X_k$.
\end{remark}

There are various obstructions to lifting. For example, the Hurwitz bound in characteristic $0$ \cite[IV.2]{hartshorne} tells us that, if $|G|>84(g(X)-1)$, then $X_k\xrightarrow{G} \P^1_k$ does not lift.
A key statement concerning the lifting problem is the Oort conjecture.

\begin{theorem}[\textbf{Oort conjecture}]\label{oortconj}
The lifting problem has a solution if $G$ is cyclic.
\end{theorem}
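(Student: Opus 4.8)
The plan is to reduce the global statement to a purely local one via deformation theory, and then to settle that local problem for cyclic $p$-groups by an inductive construction controlled by Hurwitz-tree data. First I would invoke the local-global principle for lifting $G$-covers (Bertin--M\'ezard, Garuti): since a smooth projective curve always lifts, and the obstruction to gluing local lifts of the inertia actions into a global lift lies in a cohomology group that vanishes for curves, the cover $f: X_k \xrightarrow{G} \P^1_k$ lifts if and only if, for each branch point $x$, the induced action of the inertia group $I \le G$ on the complete local ring $\widehat{\O}_{X_k,\tilde x}\cong k[[t]]$ lifts to an action on $R'[[T]]$ over some finite extension $R'/R$. So it suffices to prove that every cyclic group is a \emph{local Oort group}.

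Next, writing a cyclic inertia group as $\Z/p^a\times\Z/m$ with $p\nmid m$, the tame part $\Z/m$ lifts by Kummer theory, and a standard quotient/filtration argument reduces everything to showing that $\Z/p^n$ is a local Oort group for all $n$. A faithful $\Z/p^n$-action on $k[[t]]$ is the same datum as a $\Z/p^n$-cover of the boundary of a formal disk with prescribed higher ramification (conductor) jumps, equivalently a tower of Artin--Schreier--Witt extensions with a given ramification filtration. To lift such a tower I would build, one layer at a time, a $\Z/p^i$-cover of a rigid disk over $K$ whose reduction realizes the given $\Z/p^i$-subaction; the combinatorics of where the branch points of such a characteristic-$0$ cover may sit, and how they must coalesce on the special fiber to reproduce the wild ramification, is packaged into an \emph{admissible Hurwitz tree} decorated with a compatible system of meromorphic differentials on its edges and vertices satisfying residue and Swan-conductor constraints. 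The existence of such a decorated tree is equivalent to the existence of the lift.

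Then I would carry out the induction: given a lift of the $\Z/p^{n-1}$-subaction, extend it to a $\Z/p^n$-lift by solving a refined local problem at each vertex of the tree, the obstruction to which is a ``different''- or KGB-type quantity read off from the Swan conductors along the tree. One must show this obstruction vanishes for cyclic groups by an explicit analysis of the admissible conductor jumps --- these are constrained by an analogue of the Hasse--Arf congruences, the same divisibility phenomenon as the $m_i\equiv -1 \bmod p^{n-i}$ condition that recurs in this paper's main theorem --- combined with a deformation argument that moves the branch points of the partial lift so as to annihilate the obstruction. This is exactly the content of the Obus--Wewers reduction and of Pop's completion of it.

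The main obstacle is precisely this last step: producing, for an \emph{arbitrary} ramification filtration allowed by Hasse--Arf, a characteristic-$0$ lift whose branch locus degenerates in just the right way. The difficulty is that the differential data on the Hurwitz tree are heavily over-determined, so showing the system is nevertheless solvable demands either a delicate explicit construction or Pop's smoothing argument inside a suitable moduli space; the elementary abelian analysis in the later sections of this paper is a lower-rank, more hands-on shadow of the same phenomenon.
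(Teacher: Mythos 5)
Your outline follows exactly the route the paper itself takes for this result: the paper does not prove the Oort conjecture but cites the same chain you describe (reduction to the local problem, the tame case via Grothendieck, $\Z/p$ by Oort--Sekiguchi--Suwa, $\Z/p^2$ by Green--Matignon, and the general $\Z/p^n$ case by Obus--Wewers and Pop). Like the paper, your argument is ultimately a correct roadmap that outsources the genuinely hard step --- solving the over-determined Hurwitz-tree/differential-data problem for an arbitrary admissible ramification filtration --- to the cited works, which is appropriate here since no self-contained proof is expected or given in the paper.
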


The proof reduces to the case $\Z/p^n$, and was proven in a series of papers. In the case of prime to $p$ groups, it was proven by Grothendieck in \cite[Exp. XIII]{SGA03}, using the ``tame Riemann existence converse'' (see \cite[Theorem 1.5]{obus17}). The $\Z/p$ case was proven by Oort-Sekiguchi-Suwa \cite{OSS} in 1989, using Artin-Schreier theory. The $\Z/p^2$ case was proven by Green-Matignon \cite{GM} in 1998, by reducing to the local lifting problem and using Artin-Schreier-Witt theory. Finally the Oort conjecture was proven for general cyclic groups by Obus-Wewers \cite{OW} and Pop \cite{Pop} in 2014.

This result motivates the natural question: for which finite groups $G$ do \textit{all} $G$-covers lift? For which finite groups $G$ do \textit{some} $G$-covers lift? we define the following:

\begin{definition}[\cite{CGH}]
A finite group $G$ for which every $G$-Galois cover $X\to\P^1_k$ lifts to characteristic $0$ is called an \textit{Oort group} for $k$. If there exists a $G$-Galois cover that lifts, $G$ is called a \textit{weak Oort group}.
\end{definition}

In particular, all Oort groups are weak Oort groups. The Oort conjecture states that cyclic groups are Oort groups.

\begin{example}\label{eleabe}
Let $X_k=\P^1_k$, and $G=(\Z/p)^n$. Then $G$ embeds into the additive group of $k$ and has an additive action on $X_k$. Suppose that the $G$-Galois cover $X_k\to \P^1_k$ lifts to $R$. Then $G$ acts on the generic fiber $X_K$. However, since the genus of $X_K$ is $0$, the group of automorphisms of $X_K$ embeds into $\mathrm{PGL}_2(\bar{K})$, which does not contain $(\Z/p)^n$ for $n>1$ except for $(\Z/2)^2$. Therefore, elementary abelian $p$-groups, apart from $p$-cyclic groups and the Klein-four group, are not Oort groups. Meanwhile, they are shown to be be weak Oort groups in \cite{mat}.
\end{example}

\subsection{The Local Lifting Problem}
A local-global principle \cite{garuti} reduces the lifting problem to one of local nature.

\begin{question}[\textbf{The local lifting problem}]
Suppose $G$ is a finite group, and $k[[z]]/k[[t]]$ is a (possibly ramified) $G$-Galois extension. Does there exist some $R$, and a $G$-Galois extension $R[[Z]]/R[[T]]$ such that the $G$ action on $R[[Z]]$ reduces to the given $G$ action on $k[[z]]$?
\end{question}

\begin{definition}\label{locallifts}
If the local lifting problem has a solution for a $G$-extension $k[[z]]/k[[t]]$, we say that the extension \textit{lifts to characteristic $0$}, and $R[[Z]]/R[[T]]$ is a \em{lift of the extension}.
\end{definition}

By abuse of terminology, we will say $R[[Z]]/R[[T]]$ is a $G$-\textit{cover}. For such local covers, we have the following notion of (geometric) branch points.

\begin{definition}\label{branchlocus}
Let $R[[Z]]/R[[T]]$ be a $G$-extension. Assume that the cover $f:\mathrm{Spec}R[[Z]]\to\mathrm{Spec}R[[T]]$ is unramified at the prime ideal $(\pi)$. Then the \textit{branch points} of $R[[Z]]/R[[T]]$ are the divisors $b$ of $\mathrm{Spec}R[[T]]$ such that $f$ is ramified at $f^{-1}(b)$. Enlarge $R$ so that all branch points are $R$-rational. The set of branch points is called the \textit{branch locus} of $R[[Z]]/R[[T]]$.
\end{definition}

We then have the corresponding definitions for local Oort groups and weak local Oort groups for $k$, and from now on when we say (weak) Oort groups, we mean (weak) local Oort groups.

\begin{definition}
    A cyclic-by-$p$ group $G$ for which every $G$-extensions $k[[z]]/k[[t]]$ lifts to characteristic $0$ is called a \textit{local Oort group} for $k$. If there exists a local $G$-extension that lifts to characteristic $0$, $G$ is called a \textit{weak local Oort group}.
\end{definition}

The Galois group of any such local extension is a finite cyclic-by-$p$ group. Obstructions due to \cite{CGH}\cite{BW} give that all local Oort groups must be one of the following: cyclic groups, dihedral groups $D_{p^n}$ for any $n$, and the group $A_4$ (for $\mathrm{char}(k) = 2$).

All these possible candidates are known to be Oort groups, apart from dihedral groups of higher orders (\cite{BW06}, \cite{pagot}, \cite{obus15}, \cite{obus16}, \cite{weaver}, \cite{dang}).

Meanwhile, the question of whether a finite group $G$ is a weak Oort groups is sometimes called the Inverse Galois Problem for lifting.

For a weak Oort group $G$ that is not an Oort group, we can look more closely and ask which $G$-covers lift. The subjects of this study here are the elementary abelian $p$-covers in particular. Matignon, in proving elementary abelian $p$-groups are weak Oort groups, constructs lifts for a special family of $(\Z/p)^n$-covers of type $(p^{n-1},\ldots,p^{n-1})$.

\begin{theorem}[\cite{mat}]
$(\Z/p)^n$ is a weak Oort group for all $n\geq 1$. 
\end{theorem}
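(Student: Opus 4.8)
The plan is to produce, for each $n\ge 1$, a single $(\Z/p)^n$-extension of $k[[t]]$ that lifts; since we are free to choose this extension, we take the most symmetric one, of type $(p^{n-1},\dots,p^{n-1})$, and build its lift essentially by hand. As the remark above indicates, lifting the Artin-Schreier equations coefficient by coefficient need not carry a compatible $G$-action to characteristic $0$, so the construction must be geometric. Over a sufficiently ramified $R$ containing a primitive $p$-th root of unity $\zeta_p$, write $\lambda=\zeta_p-1$, and define $X_R\to\mathrm{Spec}\,R[[T]]$ as the normalization of the fiber product of $n$ Kummer covers $Z_i^p=u_i(T)$, $1\le i\le n$. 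Each $u_i$ is chosen so that, on the generic fiber, the $i$-th subcover is branched at exactly $p^{n-1}$ points of positive valuation --- arranged at the vertices of an appropriately scaled regular $p^{n-1}$-gon, the equidistant configuration --- all of which specialize to $T=0$ on the special fiber; in addition, the $\lambda$-adic expansions of the $u_i$ are tuned so that the reduction is the prescribed $(\Z/p)^n$-extension rather than a degenerate cover.

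For the verification I would first check that the classes of $u_1,\dots,u_n$ span an $n$-dimensional $\F_p$-subspace modulo $p$-th powers, so that $X_R$ is genuinely a $(\Z/p)^n$-cover and not a proper quotient, and that the cover is étale on the boundary annulus of the disk. Then, on the region where the cover has good reduction, one normalizes each $Z_i$ as $1+\lambda W_i$, reduces modulo $\pi$, and extracts an Artin-Schreier equation $W_i^p-W_i=\bar f_i(t)$ whose conductor one computes to be exactly $p^{n-1}$, so the special fiber has the claimed type. The quantitative heart of the argument is the vanishing-cycles bookkeeping: for every nontrivial character $\chi$ of $(\Z/p)^n$, the sum of the local ramification invariants at the characteristic-$0$ branch points of the associated $\Z/p$-subquotient must equal its conductor $m_\chi+1$ on the special fiber; once this holds for all $\chi$, Riemann-Hurwitz forces $X_R$ to be smooth over $R$ with the prescribed reduction.

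The main obstacle is the word ``all'' in the previous sentence. The $n$ families of $p^{n-1}$ branch points cannot be placed independently of one another: the relevant differential data --- which one may organize on the edges of the Hurwitz tree attached to the configuration --- must simultaneously realize conductor $p^{n-1}$ for each of the $p^n-1$ nontrivial characters while keeping the compositum étale of full degree $p^n$ over the generic point of the special fiber. Exhibiting one such configuration, rather than merely lifting the $n$ cyclic subcovers separately (which in general will not assemble into a lift of the $(\Z/p)^n$-cover), is what makes the argument work, and the precise scaling of the equidistant branch loci against powers of $\lambda$ is exactly what makes the $n$ conductor computations close simultaneously. The base case $n=1$ is the Oort-Sekiguchi-Suwa theorem recorded above.
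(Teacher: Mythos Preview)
The paper does not give its own proof of this theorem; it is quoted as a result of Matignon \cite{mat}, with only the one-line gloss that Matignon ``constructs lifts for a special family of $(\Z/p)^n$-covers of type $(p^{n-1},\ldots,p^{n-1})$.'' So there is no in-paper argument to compare against beyond that description.

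Your plan matches that description precisely: you pick the type-$(p^{n-1},\ldots,p^{n-1})$ cover, propose to lift it as the normalization of a fiber product of Kummer covers $Z_i^p=u_i(T)$ over an $R$ containing $\zeta_p$, place the branch loci equidistantly, and reduce via $Z_i=1+\lambda W_i$ to recover Artin--Schreier equations. You also correctly isolate the real difficulty: the $n$ branch configurations are not independent, and one needs a single choice of $u_1,\ldots,u_n$ for which the conductor (equivalently, the different) comes out right for \emph{every} nontrivial character simultaneously, not just for the $n$ generating ones.

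The gap is that your proposal ends exactly where the work begins. You write ``Exhibiting one such configuration\ldots is what makes the argument work,'' and then do not exhibit it. Matignon's paper writes down explicit $u_i$ --- specific products of linear factors whose roots are built from $p^{n-1}$-th roots of unity scaled by a particular power of $\lambda$ --- and then carries out the reduction computation to check that the compositum is smooth with the prescribed special fiber. That explicit choice and its verification \emph{is} the proof; without it, what you have is an accurate summary of the strategy, but not an argument. If you want to complete this, you should either reproduce Matignon's polynomials and do the reduction by hand, or (staying inside this paper's framework) invoke Theorem~\ref{branchcyclecriterion}: check the congruences $p^{n-1}-1\equiv -1\pmod{p^{n-i}}$ (immediate), and then actually produce branch sets $B_1,\ldots,B_n\subset\pi R$, each of size $p^{n-1}$, satisfying the intersection constraints $|B_{i_1}\cap\cdots\cap B_{i_r}|=(p-1)^{r-1}p^{n-r}$ and arising from genuine lifts of the $\Z/p$-subcovers.
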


The lifts constructed in \cite{mat} all have equidistant geometry, which necessitates that the covers have constant conductor type.

In Theorem 4.3.4 of Pagot's thesis \cite{pagot thesis}, he constructs lifts for families of $(\Z/p)^n$-covers, which have non-equidistant geometry, as implied by Lemme 4.1.1 of the thesis. In Section \ref{442r}, we construct lifts for a new family of $(\Z/2)^3$-covers, with non-constant conductor types (unlike Pagot's examples for that group), and also with non-equidistant geometry. Since the posting of this manuscript, Pagot has informed me that he can now obtain further results about lifting $(\Z/2)^3$-covers by using Theorem \ref{branchcyclecriterion} below. Those results will appear in a forthcoming paper of his.

\section{Branch Cycle Criterion for $(\Z/p)^n$-Covers}

In this section, we first prove lemmas on the degree of the \textit{special different}, i.e.\ the degree of the different of $k((z))/k((t))$, related to ramification jumps, and the degree of the \textit{generic different}, i.e.\ the degree of the different of $K((Z))/K((T))$. Then we arrive at the main result of this paper (Theorem \ref{branchcyclecriterion}), which is a combinatorial criterion for lifting elementary abelian $p$-covers.

\subsection{Ramification Jumps}\label{ramjumpsection}
\begin{definition}\label{conductor}
For a $\Z/p$-extension $k((z))/k((t))$ given by the Artin-Schreier equation $$z^p-z=f(\frac{1}{t}),$$ where $f(\frac{1}{t})\in k[t^{-1}]$, we call $m+1:=\mathrm{deg}(f)+1$ the \textit{conductor} of the extension.
\end{definition}

Since $k$ is algebraically closed, after a change of variable, we can assume $f(\frac{1}{t})=\frac{1}{t^m}$.

\begin{remark}
All such $\Z/p$-extensions are defined by an equation of the above form. By the Oort Conjecture, it lifts to a cover $R[[Z]]/R[[T]]$, with number of (geometric) branch points equal to the conductor.
\end{remark}

\begin{definition}
Let $L/K$ be a $G=(\Z/p)^n$-Galois totally ramified extension of local fields in characteristic $p$. Let $I_l$ (resp.\ $I^l$) be the $l$-th ramification group in lower numbering (resp.\ upper numbering). For $0\leq i\leq n-1$, define the \textit{$i$-th lower (upper) ramification jump} be the positive integer $l$ such that the $p$-rank of $I_l$ (resp.\ $I^l$) is at least $n-i$ and the $p$-rank of $I_{l+1}$ (resp.\ $I^{l+1}$) is at most $n-i-1$.
\end{definition}

In this section, the ramification jumps are always with respect to the lower numbering unless specified otherwise. Note that the ramification jumps can coincide when the quotient $I_l/I_{l+1}$ has order greater than $p$.

 \begin{lemma}\label{ramjumps}
 Let $L/K$ be a $G=(\Z/p)^n$-Galois totally ramified extension of complete discretely valued fields with residue characteristic $p$. Suppose $L/K$ can be written as a tower of $\Z/p$-extensions $L=K_n/K_{n-1}/\ldots/K_1/K_0=K$, where $K_{i+1}/K_i$ has conductor $m^{(i)}+1$, such that $m^{(0)}\leq m^{(1)}\leq \cdots \leq m^{(n-1)}$. Then the $l$-th lower ramification jump of $L/K$ is $m^{(l)}$. Moreover, the degree of the different of $L/K$, as in \cite[IV.2, Proposition 4]{serre}, is $\displaystyle\sum_{l=0}^{n-1}(m^{(l)}+1) p^{n-l-1}(p-1)$.
 \end{lemma}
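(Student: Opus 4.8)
The plan is to use the compatibility of ramification filtrations with quotients together with Herbrand's theorem, and then apply the conductor-discriminant-type formula for the different in terms of the upper numbering jumps. First I would set up the tower $L = K_n / K_{n-1}/\cdots/K_0 = K$ and observe that since $G = (\Z/p)^n$ is abelian, every subgroup is normal, so for each $i$ the extension $L/K_i$ is Galois with group $(\Z/p)^{n-i}$ and $K_i/K$ is Galois with group $(\Z/p)^i$. The key input is that the ramification groups in the \emph{upper} numbering behave well under quotients (Herbrand's theorem, \cite[IV.3, Prop.~14]{serre}): the image of $I^\ell(L/K)$ in $\mathrm{Gal}(K_{i+1}/K)$ is $I^\ell(K_{i+1}/K)$, and likewise for the $\Z/p$-quotient $K_{i+1}/K_i$ once one also tracks that $K_{i+1}/K_i$ sits inside the tower. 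Since a $\Z/p$-extension of conductor $m+1$ has its unique upper (= lower) ramification jump at $m$, the ordering hypothesis $m^{(0)} \le \cdots \le m^{(n-1)}$ on the conductors of the successive $\Z/p$-steps should translate, via this quotient-compatibility, into the statement that the upper jumps of $L/K$ occur exactly at the values $m^{(0)}, \dots, m^{(n-1)}$.

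Next I would convert from the upper numbering back to the lower numbering using the Herbrand function $\psi_{L/K}$. Because the successive jumps are nondecreasing, the Herbrand function is piecewise linear with slopes $1, 1/p, 1/p^2, \dots$, and one computes directly that the $\ell$-th upper jump at $u_\ell := m^{(\ell)}$ corresponds to the $\ell$-th lower jump at
\[
\psi_{L/K}(u_\ell) \;=\; u_0 + (u_1 - u_0)\tfrac{1}{1} + \cdots \;=\; m^{(\ell)},
\]
where the cancellation happens precisely because the $\Z/p$-step $K_{\ell+1}/K_\ell$ over which the $\ell$-th jump occurs has the \emph{same} conductor $m^{(\ell)}$ whether measured upstairs or in that subquotient — I would state this cleanly as: the lower jump of $L/K$ for the sublevel $I_\bullet$ of $p$-rank $\ge n-\ell$ equals the conductor $m^{(\ell)}$ of $K_{\ell+1}/K_\ell$, because $K_{\ell+1}/K_\ell$ is the maximal $\Z/p$-subquotient on which that part of the filtration still acts. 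This is really the content of the first assertion and I would spell out the telescoping carefully here.

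For the different, I would use the formula $v_L(\mathfrak{d}_{L/K}) = \sum_{\ell \ge 0} (|I_\ell| - 1)$ \cite[IV.1, Prop.~4]{serre}, and then pass to the degree of the different (i.e. count with multiplicity $[L:K]/|I_\ell|$ when descending, or equivalently sum over the breaks). Writing $G_j$ for the $j$-th ramification group, on the interval of lower-numbering indices where $I_\bullet$ has $p$-rank exactly $n-\ell$ — i.e. for $m^{(\ell-1)} < j \le m^{(\ell)}$ — the group $I_j$ has order $p^{n-\ell}$; summing $(|I_j|-1)$ over $1 \le j \le m^{(\ell)}$ and reorganizing, or more efficiently using the standard "jump contributes $(m^{(\ell)}+1)\cdot(\text{order drop})\cdot(\text{index below})$" bookkeeping, yields the degree $\sum_{\ell=0}^{n-1}(m^{(\ell)}+1)\,p^{n-\ell-1}(p-1)$. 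I would double-check this against the tower formula for differents, $\mathfrak{d}_{L/K} = \mathfrak{d}_{L/K_1}\cdot(\text{conorm of }\mathfrak{d}_{K_1/K})$, and the known $\Z/p$ case (degree $(m+1)(p-1)$), which gives an easy induction on $n$ as an alternative route.

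The main obstacle I anticipate is the bookkeeping in the first part: being careful that the ordering hypothesis on the conductors of the chosen tower is exactly what is needed so that the upper-numbering jumps of $L/K$ are the $m^{(\ell)}$ in order (a different, un-ordered tower would have a different Herbrand function and the naive identification would fail), and correctly handling the case of \emph{coinciding} jumps, where $I_j/I_{j+1}$ has order $p^s$ with $s>1$ and several indices $\ell$ share the same value $m^{(\ell)}$ — there one must verify the formula still reads off correctly as a sum with repeated terms. The differential-degree computation itself is then routine once the lower jumps are pinned down.
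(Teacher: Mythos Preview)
Your plan has a genuine gap in the first part. The claim that ``the upper jumps of $L/K$ occur exactly at the values $m^{(0)},\dots,m^{(n-1)}$'' is false in general. Take $n=2$ with $m^{(0)}<m^{(1)}$: once you know (as the paper proves) that the lower jumps are $m^{(0)},m^{(1)}$, the Herbrand function $\varphi_{L/K}$ has slope $1$ on $[0,m^{(0)}]$ and slope $1/p$ beyond, so the upper jumps are $m^{(0)}$ and $m^{(0)}+(m^{(1)}-m^{(0)})/p$, not $m^{(1)}$. Your ``cancellation'' in the $\psi_{L/K}$ computation therefore cannot happen. The underlying confusion is about which extensions the two compatibilities govern: upper numbering is compatible with \emph{quotients} $G\twoheadrightarrow G/H$, hence compares $L/K$ to $L^H/K$; the conductor $m^{(i)}$ of $K_{i+1}/K_i$ lives on a \emph{subquotient} $\mathrm{Gal}(L/K_i)/\mathrm{Gal}(L/K_{i+1})$, and there is no direct statement reading its jump off as an upper jump of $L/K$.

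The paper's argument fixes this by inducting on $n$ and working with the subgroup $H=\mathrm{Gal}(L/K_1)$. Subgroup compatibility of the \emph{lower} numbering gives $H_j=H\cap I_j$, so by induction the jumps of $H$ (inside the lower filtration of $L/K$) are $m^{(1)},\dots,m^{(n-1)}$. Herbrand's theorem is then applied with $\varphi_{L/K_1}$, not $\varphi_{L/K}$: one has $I_jH/H=(G/H)_{\varphi_{L/K_1}(j)}$. The ordering hypothesis $m^{(0)}\le m^{(1)}$ enters precisely to ensure $\varphi_{L/K_1}$ has slope $1$ on $[0,m^{(0)}]$, so $\varphi_{L/K_1}(m^{(0)})=m^{(0)}$ and one reads off $I_{m^{(0)}}=G$, $I_{m^{(0)}+1}\subseteq H$; for $j>m^{(0)}$ one then gets $I_j=H_j$, which pins down the remaining lower jumps as $m^{(1)},\dots,m^{(n-1)}$. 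Your different computation via $\sum_j(|I_j|-1)$ is fine and matches the paper once the lower jumps are established; the alternative induction via the tower formula for differents would also work.
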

 
 \begin{proof}
First we use induction on $n$ to compute the ramification jumps. When $n=1$, $G=\Z/p$. Let $m+1$ be the conductor of $L=K_1/K_0=K$. Then by \cite{serre}, Chapter IV, exercise 2.5, $G_m=\Z/p$ and $G_{m+1}=1$. Thus the unique ramification jump is one less than the conductor.
 
 Suppose the statement about the ramification jumps is true for $n-1$. Consider $L/K$ as in the hypothesis, and let $H=(\Z/p)^{n-1}$ be the Galois group of $L/K_1$. Then $m^{(l)}$, for $1\leq l\leq n-1$, is the $(l-1)$-th ramification jump of $H$. Let $H_i$ be the $i$-th ramification group of $L/K_1$. First note that by \cite[page 73]{serre}, $$\varphi_{L/K_1}(m^{(0)})+1=\frac{1}{|H_0|}\sum_{i=0}^{m^{(0)}}|H_i|=\frac{1}{p^{n-1}}(m^{(0)}+1)p^{n-1}=m^{(0)}+1,$$ where $\varphi$ is the Herbrand function \cite[IV.3]{serre}, and $\varphi_{L/K_1}(m)>m^{(0)}$ for $m>m^{(0)}$. Since $m^{(0)}\leq m^{(1)}$, $H=H_{m^{(0)}}=I_{m^{(0)}}\cap H$ by \cite[Chapter IV, Proposition 2]{serre}, and $H\subseteq I_{m^{(0)}}$. Thus $I_{m^{(0)}}/H=I_{m^{(0)}}H/H=(G/H)_{\varphi_{L/K_1}(m^{(0)})}=(G/H)_{m^{(0)}}=\Z/p$ by Herbrand's theorem \cite[IV.3, Lemma 5]{serre}, hence $I_{m^{(0)}}=(\Z/p)^n$.
 
 Now, let $l$ be the largest integer such that $m^{(l)}=m^{(0)}$. We have $$I_{m^{(l)}+1}H/H=(G/H)_{\phi_{L/K_1}(m^{(l)}+1)}=1,$$ so $I_{m^{(l)}+1}\subseteq H$. Then $I_{m^{(l)}+1}=I_{m^{(l)}+1}\cap H=H_{m^{(l)}+1}=(\Z/p)^{n-l-1}$. Therefore $m^{(i)}$ is the $i$-th ramification jump of $L/K$ for all $0\leq i\leq l$. Similarly, for all $i>l$, $I_{m^{(i)}}=I_{m^{(i)}}\cap H=H_{m^{(i)}}$, and $I_{m^{(i)}+1}=I_{m^{(i)}+1}\cap H=H_{m^{(i)}+1}$, so $m^{(i)}$ is the $i$-th lower ramification jump.
 
 Finally, by \cite[IV.2, Proposition 4]{serre}, we get that the degree of the different of $L/K$ is
 \begin{align*}
 d_s=\sum_{j=0}^\infty (|I_j|-1)=\sum_{l=0}^{n-1}(m^{(l)}-m^{(l+1)})(p^{n-l}-1)=\sum_{l=0}^{n-1}(m^{(l)}+1) p^{n-l-1}(p-1).
 \end{align*}
 \end{proof}
 
\begin{remark}
For $L/K$ as in Lemma \ref{ramjumps}, if we take the tower of extensions $L/L^{G_{j_{n-1}}}/\cdots/ L^{G_{j_0}}$, where $j_i$ is the $i$-th lower ramification jump, then the associated sequence of conductors is ascending.
\end{remark}

\subsection{Conductor Type}\label{condtype}
For an elementary abelian $p$-cover $k[[z]]/k[[t]]$ where $G=(\Z/p)^n$, whether a $G$ can be lifted to characteristic $0$ often depends on the conductors of its $\Z/p$-subcovers. For ease of notation, we define a $(\Z/p)^n$-cover of certain (conductor) type.

\begin{definition}\label{type}
Let $G=(\Z/p)^n$, and assume that $G$ is a group of automorphisms of $k[[z]]$ as a $k$-algebra. Suppose that $(m_1,\ldots,m_n)$ is the lexicographically smallest $n$-tuple of integers such that there exists subgroups $G_1,\ldots,G_n\subset G$ of index $p$ that satisfy the following conditions:
\begin{enumerate}
\item The $p$-cyclic extensions $k[[z]]^{G_i}/k[[z]]^G$ have conductors $m_i+1$,
\item $k[[z]]^{G_1},\ldots,k[[z]]^{G_n}$ are linearly disjoint over $k[[z]]^G$.
\end{enumerate}
Then we say that $k[[z]]/k[[z]]^G$ is a \textit{cover of type} $(m_1+1,\ldots,m_n+1)$, with respect to $G_1,
\ldots, G_n$. Note that $m_1\leq m_2\leq\cdots\leq m_n$.
\end{definition}

\begin{remark}
    Note that this is different from the notations in Mitchell's thesis \cite{mitchell}, where he calls such covers of type $(m_1,\ldots,m_n)$.
\end{remark}

\begin{proposition}\label{naseq}
With the notations in the above definition, let $K_0:=k((t))$ and $K_i=k((z))^{G_i}$ for $1\leq i\leq n$. Then after a change of variable, the $\Z/p$-extensions $K_1$ and $K_i$ (for $i\geq 2$) over $K_0$ are simultaneously defined by Artin-Schreier equations:
\begin{align*}
w_1^p-w_1&=f_1(\frac{1}{t})=\frac{1}{t^{m_1}}\\
w_i^p-w_i&=f_i(\frac{1}{t})=\sum_{1\leq j\leq m_i,\ p\nmid m'}\frac{c_{i,j}}{t^j},
\end{align*}
where for $i\neq j$, the leading coefficients of $f_i$ and $f_j$ are $\F_p$-linearly independent if $m_i=m_j$, and where $c_{i,m_i}\not\in \F_p$ for $i\geq 2$.
\end{proposition}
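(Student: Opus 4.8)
The plan is to combine plain Artin--Schreier theory with the lexicographic minimality that is built into the definition of the conductor type.

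First I would fix, for each $i$, a reduced Artin--Schreier equation for $K_i/K_0$. Each $K_i/K_0$ is a $\Z/p$-extension of a complete discretely valued field of residue characteristic $p$, so it is generated by some $w_i$ with $w_i^p-w_i=g_i$, $g_i\in K_0$; since $K_i/K_0$ is wildly ramified and $k$ is algebraically closed, I may replace $g_i$ by a congruent element modulo $\wp(K_0)$ lying in $t^{-1}k[t^{-1}]$ with no constant term and no exponent divisible by $p$. The degree of this reduced polynomial is the unique ramification jump of $K_i/K_0$, which by the $n=1$ case of Lemma~\ref{ramjumps} equals the conductor minus one, i.e.\ $m_i$; in particular $p\nmid m_i$.

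Next I would normalize the smallest subcover. Because $m_1\le m_j$ for all $j$ and $p\nmid m_1$, the standard reduction recalled after Definition~\ref{conductor} applies to $K_1/K_0$: scaling $t$ so that the leading coefficient of $g_1$ becomes $1$, then killing the lower-order polar terms one pole order at a time by a power-series substitution $t\mapsto t(1+\cdots)$ — using $p\nmid m_1$ so that the relevant coefficient is invertible at each stage, and feeding any $p$-divisible pole order that appears back to lower order through $\wp$ — puts $K_1/K_0$ in the form $g_1=t^{-m_1}$. I then apply this one change of variable to every $g_j$ and re-reduce modulo $\wp(K_0)$; since the conductor of $K_j/K_0$ is an invariant of the extension, the new reduced polynomial $f_j$ still has degree $m_j$, so we have the asserted shape $f_1=t^{-m_1}$, $f_i=\sum_{1\le j\le m_i,\ p\nmid j}c_{i,j}t^{-j}$.

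The arithmetic conditions on the leading coefficients I would then extract from minimality. If, among the indices sharing a common value $m_i=m_j=m$, some nontrivial $\F_p$-relation $\sum\lambda_i c_{i,m}=0$ held, then $\sum\lambda_i w_i$ would generate a $\Z/p$-extension whose reduced Artin--Schreier polynomial has degree $<m$, hence conductor $<m+1$. Swapping the relevant $G_i$ for the kernel of this new character leaves the span of the characters $\ell_1,\dots,\ell_n$ (where $G_i=\ker\ell_i$) unchanged, hence preserves condition (2) of Definition~\ref{type}, while strictly lowering one entry of the sorted conductor tuple — contradicting minimality. Taking $j=1$ with $c_{1,m_1}=1$ shows $c_{i,m_1}\notin\F_p$ for every $i\ge 2$ with $m_i=m_1$ (otherwise one even produces a subcover of conductor $<m_1+1$, impossible by the definition of $m_1$). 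For $i\ge 2$ with $m_i>m_1$ I would use the residual coordinate freedom: substitutions $t\mapsto\zeta t$ with $\zeta^{m_1}\in\F_p^\times$ preserve the shape $f_1=t^{-m_1}$ (after rescaling $w_1$ by $\F_p^\times$) and multiply $c_{i,m_i}$ by $\zeta^{-m_i}$, and a suitable choice of $\zeta$ pushes $c_{i,m_i}$ out of $\F_p$.

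The main obstacle I anticipate is the last step: showing that all of these normalizations can be carried out simultaneously. One must check that fixing up one subcover does not spoil another — that killing the lower-order terms of some $f_i$ leaves $f_1=t^{-m_1}$ intact, that the leading-coefficient relations at distinct conductor levels $m$ can be treated independently, and, most delicately, that the limited residual freedom $\zeta^{m_1}\in\F_p^\times$ interacts with lexicographic minimality so as to still force $c_{i,m_i}\notin\F_p$ when $m_i$ is a multiple of $m_1$. By contrast, the Artin--Schreier manipulations in the first two steps are routine.
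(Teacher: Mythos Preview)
Your approach coincides with the paper's: pick a uniformizer putting $K_1/K_0$ in the form $w_1^p-w_1=t^{-m_1}$, write each remaining $K_i/K_0$ in reduced Artin--Schreier form with that same $t$, and then read off the constraints on leading coefficients from lexicographic minimality. For the $\F_p$-linear independence when $m_i=m_j$, and for $c_{i,m_i}\notin\F_p$ when $m_i=m_1$, your argument and the paper's are identical: a forbidden $\F_p$-relation produces a $\Z/p$-subextension of $K_iK_j/K_0$ (resp.\ $K_1K_i/K_0$) of strictly smaller conductor, contradicting minimality of the tuple.

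The obstacle you single out is genuine, and the paper does not resolve it either. Its final sentence asserts that if $c_{i,m_i}\in\F_p$ then ``an $\F_p$-linear combination of $w_1$ and $w_i$'' has conductor below $m_i+1$; but when $m_i>m_1$ every such combination with nonzero $w_i$-coefficient still has leading term $c_{i,m_i}t^{-m_i}$, so no drop occurs. Your rescaling $t\mapsto\zeta t$ with $\zeta^{m_1}\in\F_p^\times$ already goes beyond what the paper attempts, and your suspicion that it can fail is correct: for $p=3$, $f_1=t^{-1}$, $f_2=t^{-2}$ one is forced into $\zeta\in\F_3^\times$, and $c_{2,2}=1$ cannot be moved out of $\F_3$ by any change of variable preserving the shape of $f_1$. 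In the paper's applications (via Remark~\ref{1position}) only the linear-independence clause is actually used, so this does not affect anything downstream. Your other anticipated difficulties are not real concerns: only $f_1$ is normalized to a monomial --- the remaining $f_i$ retain their lower-order terms --- so there is no interference between subcovers, and the minimality argument treats each conductor level independently.
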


\begin{proof}
First, for some uniformizer $t$ of $K_0$, $K_1/K_0$ can be defined by $w_1^p-w_1=\frac{1}{t^{m_1}}$, and with that same uniformizer $t$, $K_i/K_0$ can be defined by Artin-Schreier equations as above.

Suppose that $m_i=m_j$ for some $i<j$. Then $(m_i,m_j)$ must also be the lexicographically the smallest tuple of conductors satisfying the conditions in Definition \ref{type} for the extension $K_iK_j/K_0$. Suppose $ac_{i, m_i}+bc_{j, m_j}=0$ for some $a, b\in k$. Then there is a $\Z/p$-subextension of $K_iK_j/K_0$ defined by $w^p-w=af_i(\frac{1}{t})+bf_j(\frac{1}{t})$, the right-hand-side of which has degree strictly less than $m_i$, i.e. its conductor is strictly less than $m_i$, giving a contradiction. Thus $c_{i,m_1}$ and $c_{j,m_j}$ are linearly independent over $k$.

Finally, suppose $c_{i,m_i}\in\F_p$ for some $i\geq 2$. Then an $\F_p$-linear combination of $w_1$ and $w_i$ generates $\Z/p$-subextension of $K_1K_i/K_0$ having conductor strictly less than $m_i$, again a contradiction. Therefore, $c_{i,m_i}\not\in\F_p$ for $i\geq 2$.
\end{proof}

\begin{remark}\label{1position}
As a variant, the leading coefficient $1$ can instead be put in any one of the $n$ equations.
\end{remark}

\subsection{Key Lemmas}\label{lemmas}
\begin{lemma}\label{higherconductorsp}
Let $G=(\Z/p)^n$, and $k[[z]]/k[[t]]$ be a $G$-cover of type $(m_1+1,\ldots,m_n+1)$, with respect to $G_1,\ldots, G_n$, where $k[[t]]=k[[z]]^G$. Then for $0\leq l\leq n-1$, the $l$-th lower ramification jump of $k((z))/k((t))$ is $$ p^lm_{l+1}-(p-1)\sum_{1\leq i\leq l}p^{i-1}m_i.$$
\end{lemma}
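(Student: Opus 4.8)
The plan is to compute the lower ramification jumps indirectly: first I would show that the conductors $m_1\le\cdots\le m_n$ of Definition \ref{type} are exactly the \emph{upper} ramification jumps $u_0\le\cdots\le u_{n-1}$ of $L/K:=k((z))/k((t))$, i.e.\ $m_{l+1}=u_l$; then I would pass from upper to lower numbering using the Herbrand function. Writing $G=(\Z/p)^n$ and $I^u$ for the upper-numbering ramification groups, the key input for the first step is the duality between the filtration $\{I^u\}$ and the conductors of the $\Z/p$-subcovers of $L/K$. A nontrivial character $\chi\in\mathrm{Hom}(G,\F_p)$ cuts out a $\Z/p$-subextension $L^{\ker\chi}/K$ whose conductor equals $u+1$, where $u$ is the largest index on which $\chi$ is nontrivial on $I^u$; by the Hasse--Arf theorem this $u$ is an integer and is one of the jumps $u_0,\dots,u_{n-1}$, and $\dim_{\F_p}\{\chi:\mathrm{cond}(\chi)\le c+1\}$ is determined by $\{u_i\}$ for every $c$. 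Choosing $\chi_1,\dots,\chi_n$ greedily with respect to the increasing filtration $\{(I^u)^\perp\}_u$ then realizes the lexicographically smallest tuple of conductors of $n$ linearly disjoint $\Z/p$-subcovers, and this tuple is forced to be $(u_0+1,\dots,u_{n-1}+1)$. Matching with Definition \ref{type} also requires noting that the subcover $k[[z]]^{G_i}/k[[t]]$ given by $w^p-w=f(1/t)$ with $\deg f=m_i$ has conductor $m_i+1$ in the ramification-theoretic sense — a short computation of its different, of degree $(p-1)(m_i+1)$, using $\gcd(m_i,p)=1$.

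For the second step, recall that the $l$-th lower jump $j_l$ and the $l$-th upper jump $u_l$ are related by $j_l=\psi_{L/K}(u_l)$, where $\psi_{L/K}=\varphi_{L/K}^{-1}$ is the inverse Herbrand function \cite[IV.3]{serre}. Since $I^u$ has order $p^{n-i}$ for $u\in(u_{i-1},u_i)$ (with the convention $u_{-1}=0$), the function $\psi_{L/K}$ is piecewise linear with slope $[I_0:I^u]=p^{i}$ on $[u_{i-1},u_i]$, and $\psi_{L/K}(0)=0$. Telescoping along these intervals gives
\begin{align*}
j_l=\psi_{L/K}(u_l)=\sum_{i=0}^{l}p^{i}(u_i-u_{i-1})=p^{l}u_l-(p-1)\sum_{i=0}^{l-1}p^{i}u_i,
\end{align*}
and substituting $u_i=m_{i+1}$ yields $j_l=p^{l}m_{l+1}-(p-1)\sum_{1\le i\le l}p^{i-1}m_i$, as claimed. (One sanity check: for $n=1$ this reads $j_0=m_1$, recovering Lemma \ref{ramjumps}, and for $n=2$ it gives $j_1=pm_2-(p-1)m_1$, which is consistent with the different formula of Lemma \ref{ramjumps} applied to a tower whose conductors are $j_0\le j_1$.)

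An alternative route would be an induction on $n$ mirroring the proof of Lemma \ref{ramjumps}: view $L/K_1$ as a $(\Z/p)^{n-1}$-cover and track how the conductors of its $\Z/p$-subcovers relate to $m_2,\dots,m_n$ under base change along the conductor-$(m_1+1)$ extension $K_1/K_0$, then apply the inductive hypothesis to $L/K_1$ and convert the resulting jumps back down to $L/K$. This works, but the base-change bookkeeping is precisely the Herbrand-function computation above in disguise, so the first approach seems cleaner.

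The main obstacle I anticipate is the first step — pinning down that the combinatorially defined $m_i$ of Definition \ref{type} really are the upper ramification jumps. The delicate points are: (i) reconciling the two notions of ``conductor'' (the degree of the defining Artin--Schreier polynomial versus the break in the upper-numbering filtration), which needs $\gcd(m_i,p)=1$ and the standard different computation; (ii) the lexicographic/greedy argument showing that the sorted conductor tuple of a family of $n$ linearly disjoint $\Z/p$-subcovers is forced by the filtration; and (iii) the appeal to Hasse--Arf, so that the upper jumps — hence the $m_i$ — are genuinely integers and the piecewise-linear description of $\psi_{L/K}$ with integer breakpoints is valid. Once these are settled, the passage to lower numbering is a routine, if slightly fiddly, telescoping computation.
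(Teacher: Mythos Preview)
Your proposal is correct and follows essentially the same approach as the paper: identify the upper ramification jumps of $L/K$ with $m_1,\dots,m_n$ via compatibility of upper numbering with quotients (Serre IV, Prop.~14), then convert to lower numbering via the Herbrand function. The paper interleaves these two steps in an induction on $l$ (computing $\varphi_{L/K}(m^{(l)})=m_{l+1}$ and solving for $m^{(l)}$ using the inductively known lower jumps), whereas you separate them cleanly and handle the lexicographic minimality in Definition~\ref{type} more explicitly via the greedy/duality argument, but the substance is the same.
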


\begin{proof}
Let $m^{(l)}$ denote the $l$-th lower ramification jump of $L/K$. For the base case $l=0$, it follows from the hypothesis and Lemma \ref{ramjumps} that $m^{(0)}=m_1$.

For the induction step, assume that $m^{(j)}=p^j m_{j+1}-(p-1)\sum_{1\leq i\leq j}p^{i-1}m_i$ for all $j\leq l-1$. Let $M/K$ be the $(\Z/p)^{l+1}$-extension $K_0\cdots K_{l+1}/K_0$, and $\Gamma:=\mathrm{Gal}(M/K)$. Recall that $K_i/K_0$ is a $\Z/p$-extension with conductor $m_i+1$. Let $\Gamma_j$ be the $j$-th ramification group of $M/K$ with lower numbering, and let $\varphi_{M/K}(j)$ be the Herbrand function \cite{serre}. Then $\Gamma_j=\Gamma^{\varphi_{M/K}(j)}$, where $\Gamma^i$ is the $i$-th ramification group of $M/K$ with upper numbering. Let $H$ be the subgroup of $\Gamma$ such that $M^H=K_{l+1}$. By Proposition IV.14 in \cite{serre},
\begin{align*}
\Gamma^{i}H/H=(\Gamma/H)^{i}=
\begin{cases}
\Z/p,\ 0\leq i\leq m_{l+1}\\
1,\ i> m_{l+1}.
\end{cases}
\end{align*}
Here, the last equality is due to the isomorphism $\Gamma/H\cong \Z/p$ and the fact that the unique upper jump of $K_{l+1}/K$ equals to its unique lower jump, which is one less than its conductor. Therefore, the $l$-th upper ramification jump of $M/K$ is $m_{l+1}$. Since the upper numbering for ramification groups is compatible with quotients \cite[Proposition 14]{serre}, so are the upper ramification jumps. Thus $m_{l+1}$ is also the $l$-th upper ramification jump of $L/K$. Moreover, $\varphi_{L/K}(m^{(l)})=m_{l+1}$, since $m^{(l)}$ is the $l$-th lower ramification jump of $M/K$.

Now, let $g_j=|I_j|$, where $I_j$ is the $j$-th ramification group of $L/K$ with lower numbering. Observe that $g_j=p^{n-i-1}$ for $m^{(i)}<j\leq m^{(i+1)}$. By the formula on \cite[page 73]{serre} and the induction hypothesis, we have
\begin{align*}
m_{l+1}+1=&1+\varphi_{L/K}(m^{(l)})=\frac{1}{|G|}\sum_{j=0}^{m^{(l)}}g_j\\
=&\frac{1}{p^{n}}\left((m^{(0)}+1)p^{n}+\sum_{i=0}^{l-1}(m^{(i+1)}-m^{(i)})p^{n-i-1}\right)\\
=&m^{(l)}p^{-l}+1+(p-1)p^{-l}\sum_{1\leq j\leq l}p^{j-1}m_j.
\end{align*}
Therefore, the $l$-th ramification jump of $L/K$ is
$m^{(l)}=p^lm_{l+1}-(p-1)\sum_{1\leq i\leq l}p^{i-1}m_i$.
\end{proof}

\begin{remark}
The above proof was based on ideas suggested to the author by Andrew Obus. This lemma can also be proven in a way analogous to Green and Matignon's original proof for the case $\Z/p\times\Z/p$ \cite[Theorem 5.1]{mat}.
\end{remark}

\begin{lemma}\label{genericdifferent}
Let $R[[Z]]/R[[T]]$ be a local $G$-cover, and let $G_1,\ldots, G_n$ be index $p$ subgroups of $G$ such that $R[[Z]]^{G_i}$ and $R[[Z]]^{G_j}$ are linearly disjoint for all $i\neq j$. Suppose $R[[Z]]^{G_1},\ldots,R[[z]]^{G_{n}}$ have branch loci $B_1,\ldots,B_{n}$, each containing $|B_i|=m_i+1$ (geometric) branch points, such that for any $r$ with $1\leq r\leq n$ and any subset $\{B_{i_1},\ldots,B_{i_r}\}$, the cardinality of the set intersection satisfies $\displaystyle|\cap_{1\leq j\leq r}B_{i_j}|=\frac{(\mathrm{min}_j(m_{i_j})+1)(p-1)^{r-1}}{p^{r-1}}$. Then the generic different of $R[[z]]/R[[t]]$ is $\displaystyle\sum_{l=0}^{n-1}(p-1)p^l(m_{l+1}+1)$.
\end{lemma}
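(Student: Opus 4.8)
The plan is to pass to the generic fibre of $R[[Z]]/R[[T]]$ and use that, since $\mathrm{char}\,K=0$, the resulting cover is tamely ramified, so the inertia group $I_b$ at each branch point $b$ is cyclic; being a $p$-group, $I_b\cong\Z/p$ for \emph{every} branch point. Put $B:=\bigcup_{i=1}^n B_i$. Since the $G_i$ are (jointly) linearly disjoint, $\bigcap_i G_i=\{1\}$, so a point is a branch point of the whole $G$-cover exactly when it lies in some $B_i$; hence $B$ is the full branch locus on the generic fibre. The Riemann--Hurwitz formula in residue characteristic $0$ then gives, using $|I_b|=p$,
\[
(\text{generic different})=\sum_{b\in B}\Big(p^{n}-\tfrac{p^{n}}{|I_b|}\Big)=\big(p^{n}-p^{n-1}\big)\,|B|=(p-1)\,p^{\,n-1}\,|B|.
\]
(Equivalently, one may invoke the conductor--discriminant formula for the abelian extension $K((Z))/K((T))$: it expresses the generic different as $\sum_{H}(p-1)|B_H|$ over the $\tfrac{p^n-1}{p-1}$ index-$p$ subgroups $H\le G$, and a short Kummer-theoretic count---using that a Kummer generator $\delta_i$ of $R[[Z]]^{G_i}\otimes K((T))/K((T))$ has $v_b(\delta_i)\not\equiv 0\bmod p$ precisely for $b\in B_i$---shows every $b\in B$ lies in exactly $p^{\,n-1}$ of the $B_H$, so one again lands on $(p-1)p^{\,n-1}|B|$.)

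It then remains to evaluate $|B|=|\bigcup_i B_i|$ by inclusion--exclusion from the hypothesis $\big|\bigcap_{j=1}^r B_{i_j}\big|=(\min_j(m_{i_j})+1)(p-1)^{r-1}/p^{\,r-1}$. Order the indices so that $m_1\le\cdots\le m_n$, and group the nonempty subsets $S\subseteq\{1,\dots,n\}$ by their least element $j$, writing $S=\{j\}\sqcup S'$ with $S'\subseteq\{j+1,\dots,n\}$; then $\min_{i\in S}m_i=m_j$, and the alternating sum over each group telescopes by the binomial theorem:
\[
\sum_{S'\subseteq\{j+1,\dots,n\}}(-1)^{|S'|}\,\frac{(m_j+1)(p-1)^{|S'|}}{p^{\,|S'|}}\;=\;(m_j+1)\Big(1-\tfrac{p-1}{p}\Big)^{\,n-j}\;=\;\frac{m_j+1}{p^{\,n-j}}.
\]
Summing over $j$ gives $|B|=\sum_{j=1}^n (m_j+1)\,p^{\,j-n}$, so the generic different equals
\[
(p-1)\,p^{\,n-1}\sum_{j=1}^{n}\frac{m_j+1}{p^{\,n-j}}=\sum_{j=1}^{n}(p-1)\,p^{\,j-1}(m_j+1)=\sum_{l=0}^{n-1}(p-1)\,p^{\,l}(m_{l+1}+1),
\]
as claimed.

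The step I expect to require the most care is the structural input behind the first paragraph: that on the generic fibre every branch point of the $G$-cover really has inertia of order exactly $p$, that the full branch locus is $\bigcup_i B_i$ (where joint linear disjointness of the $G_i$ enters), and---most importantly---that each $B_i$ genuinely consists of $m_i+1$ \emph{distinct} ramification points, i.e.\ that no branch points of the individual $\Z/p$-subcovers have collided on the generic fibre; otherwise the counts above (and the fact $v_b(\delta_i)\not\equiv0$) can be corrupted by extra cancellations mod $p$. Once this is in place the remainder is a routine inclusion--exclusion. As a consistency check, the value obtained coincides with the degree of the special different $\sum_{l=0}^{n-1}(m^{(l)}+1)p^{\,n-l-1}(p-1)$ computed from Lemma~\ref{higherconductorsp} for a cover of conductor type $(m_1+1,\dots,m_n+1)$---unsurprising, since the hypothesis is tailored precisely so that the reduction has this conductor type. (An alternative derivation is by induction on $n$: split off a $\Z/p$-subcover $R[[Z]]^{G_1}$ realizing the minimal conductor $m_1+1$, apply the tower formula for differents, and check that the degree-$p^{\,n-1}$ quotient cover $K((Z))/\big(R[[Z]]^{G_1}\otimes K((T))\big)$ is again of the required form, the points of $B_1\cap B_j$ becoming unramified in it.)
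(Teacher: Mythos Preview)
Your proposal is correct and follows essentially the same approach as the paper: both reduce to computing $(p-1)p^{n-1}|B|$ via tame ramification on the generic fibre, then evaluate $|B|$ by inclusion--exclusion grouped by the minimal index in each subset. Your presentation of the inclusion--exclusion is arguably cleaner---you telescope each inner sum directly via the binomial identity $(1-\tfrac{p-1}{p})^{n-j}=p^{-(n-j)}$, whereas the paper writes out the double sum over subset size and minimal index and leaves the final simplification implicit---but this is a cosmetic difference, not a substantive one; the parenthetical alternatives (conductor--discriminant, induction on $n$) and the cautionary remarks about distinctness of branch points are extras not present in the paper's brief argument.
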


\noindent\textit{Proof.}
Since the generic fiber of the lift $R[[Z]]/R[[T]]$ is in characteristic $0$, it is tamely ramified, with $p^{n-1}$ ramification points above each branch point. Thus the generic different is $(p-1)p^{n-1}$ times the total number of branch points, counted without repeat.

Let $B=B_1\cup \cdots \cup B_n$ be the branch locus of $K((Z))/K((T))$. We use the inclusion-exclusion principle to count the number of branch points. For each $1\leq i\leq n$, $\mathrm{min}_j(m_{i_j})+1$ is $m_i+1$ for all $\{B_i, B_{i_2},\ldots,B_{i_k}\}$ such that $i_j\geq i$ for all $j$. There are $\binom{n-i}{k-1}$ such $k$-subsets. Therefore
\begin{align*}
d_\eta&=(p-1)p^{n-1}|B|\\
&=(p-1)p^{n-1}\sum_{k=1}^n(-1)^{k-1}\sum_{i=1}^{n-k+1}\sum_{i_j\geq i\forall j}|B_i\cap B_{i_2}\cap\cdots\cap B_{i_k}|\\
&=(p-1)p^{n-1}\sum_{k=1}^n(-1)^{k-1}\sum_{i=1}^{n-k+1}\binom{n-i}{k-1}(p-1)^{k-1}p^{1-k}(m_i+1)\\
&=\sum_{l=0}^{n-1}(p-1)p^l(m_{l+1}+1).{\hskip 4in} \qed
\end{align*}

\subsection{Main Theorem}\label{mainthm}
We now state our main result, which generalizes Theorem 5.1 of \cite{GM}.

\begin{theorem}[Branch cycle criterion]\label{branchcyclecriterion}
Let $G=(\Z/p)^n$. Suppose $k[[z]]/k[[t]]$ is a $G$-extension of conductor type $(m_1+1,\ldots,m_n+1)$, with respect to $G_1,\ldots, G_n$. Then there is a lift of $G$ to a group of automorphisms of $R[[Z]]$ if and only if the following two conditions hold: 
\begin{enumerate}
\item $m_i\equiv -1$ mod $p^{n-i}$ for $1\leq i\leq n-1$,
\item $k[[z]]^{G_1},\ldots,k[[z]]^{G_{n}}$ can be lifted with branch loci $B_1,\ldots,B_{n}$ such that for any subset of $r$ branch loci $\{B_{i_1},\ldots,B_{i_r}\}$, $\displaystyle|\cap_{1\leq j\leq r}B_{i_j}|=\frac{(\mathrm{min}_j(m_{i_j})+1)(p-1)^{r-1}}{p^{r-1}}$.
\end{enumerate}
\end{theorem}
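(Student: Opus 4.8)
The plan is to prove both directions by comparing the special different of $k((z))/k((t))$ with the generic different of any lift $R((Z))/R((T))$, using the fact that in a lift the different can only grow when branch points coalesce on the special fiber. Recall that Lemma \ref{higherconductorsp} gives the special different as $d_s=\sum_{l=0}^{n-1}(p-1)(p^n-p^{n-l-1})\bigl(p^l m_{l+1}-(p-1)\sum_{1\le i\le l}p^{i-1}m_i\bigr)/\ldots$ — more precisely it is determined by the ramification jumps $m^{(l)}=p^l m_{l+1}-(p-1)\sum_{1\le i\le l}p^{i-1}m_i$ via $d_s=\sum_j(|I_j|-1)$ — while Lemma \ref{genericdifferent} computes the generic different as $d_\eta=\sum_{l=0}^{n-1}(p-1)p^l(m_{l+1}+1)$ under the stated intersection hypothesis on the branch loci. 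The engine of the proof is the inequality $d_\eta \ge d_s$ for any lift (local-to-global: the Kato/relative different is effective), with equality forcing a very rigid coalescing pattern.

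First I would treat the necessity direction. Given a lift of $G$ to $\mathrm{Aut}(R[[Z]])$, restrict to each index-$p$ subgroup $G_i$ to get a lift of the $\Z/p$-subcover $k[[z]]^{G_i}/k[[t]]$ with some branch locus $B_i$; by the Oort–Sekiguchi–Suwa theorem each such subcover lifts with $|B_i|=m_i+1$ geometric branch points. Linear disjointness of the $R[[Z]]^{G_i}$ is inherited from the special fiber. The key computation is to expand $d_\eta = (p-1)p^{n-1}|B_1\cup\cdots\cup B_n|$ by inclusion–exclusion in terms of the intersection cardinalities $|\cap_j B_{i_j}|$, and to show that the constraint $d_\eta \ge d_s$, combined with a lower bound $|\cap_j B_{i_j}|\ge \lceil(\min_j m_{i_j}+1)(p-1)^{r-1}/p^{r-1}\rceil$ coming from how branch points of the compositum must collide (each pair of branch points in the special-fiber picture of a $(\Z/p)^2$-subquotient must coalesce to the extent dictated by the ramification jump formula), forces the intersections to be exactly the claimed value and forces integrality, i.e. $p^{r-1}\mid (\min_j m_{i_j}+1)(p-1)^{r-1}$, hence $p^{r-1}\mid \min_j m_{i_j}+1$. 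Taking $r=n-i+1$ and the subset $\{B_i,B_{i+1},\ldots,B_n\}$ yields $m_i\equiv -1 \pmod{p^{n-i}}$, which is condition (1); the exact-intersection statement is condition (2).

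For sufficiency, suppose (1) and (2) hold. I would build the lift subcover-by-subcover. Using condition (1), each $\Z/p$-subcover $k[[z]]^{G_i}/k[[t]]$ can be lifted (Oort–Sekiguchi–Suwa) and moreover, because the conductor $m_i+1$ is divisible by the requisite power of $p$, one has enough freedom to move the branch points into the prescribed configuration $B_i$ of condition (2) — this is where a Hurwitz-tree / deformation argument is needed to realize a $\Z/p$-lift with a prescribed branch divisor, and it is exactly the point where Green–Matignon's $\Z/p\times\Z/p$ argument generalizes. Then $R[[Z]]$ is obtained as the compositum $R[[Z]]^{G_1}\cdots R[[Z]]^{G_n}$ inside a suitable algebraic closure; one must check this compositum is again of the form $R[[Z']]$ for a power series ring (normality plus the branch locus being supported on a divisor with the right multiplicities), that its special fiber is the given $k[[z]]/k[[t]]$, and that it carries the full $G=(\Z/p)^n$-action. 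The crucial verification is that the generic different of this compositum equals $d_\eta=\sum_{l}(p-1)p^l(m_{l+1}+1)$ by Lemma \ref{genericdifferent}, and that this matches $d_s$ computed from Lemma \ref{higherconductorsp}; equality of differents is what guarantees that the compositum has good reduction (the special fiber is smooth away from the origin and totally ramified there with the correct jumps), so the special fiber is exactly $f$.

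The main obstacle I expect is the sufficiency direction's realization step: producing, for each $i$, a $\Z/p$-lift of the given Artin–Schreier subcover whose branch locus is an arbitrary prescribed configuration $B_i$ of size $m_i+1$ satisfying the mutual-intersection constraints, \emph{simultaneously} for all $i$ in a way that the composita have the right different. Controlling differents of composita when branch loci partially overlap — i.e. showing the intersection pattern in condition (2) is both achievable and sufficient to force good reduction — is the technical heart, and is where Hurwitz trees (the combinatorial data tracking how branch points coalesce) must be invoked; the clean numerology in Lemmas \ref{higherconductorsp} and \ref{genericdifferent} is precisely engineered so that "$d_\eta=d_s$" is equivalent to the combinatorial criterion, but turning that equivalence into an actual construction of the lift, rather than just an obstruction, requires the full deformation-theoretic machinery.
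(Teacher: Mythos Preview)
Your overall strategy---compare the special and generic differents via the different criterion of \cite[Section 3.4]{GM}---is exactly what the paper does, but both directions of your argument have a real gap.

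\textbf{Sufficiency.} You have misread condition (2). The statement does not ask you to \emph{construct} $\Z/p$-lifts with prescribed branch loci; it \emph{hypothesizes} that such lifts exist. So ``producing, for each $i$, a $\Z/p$-lift of the given Artin--Schreier subcover whose branch locus is an arbitrary prescribed configuration $B_i$'' is not something you need to do---it is given. The phantom obstacle you describe (Hurwitz trees, deformation theory, ``realization step'') simply does not arise. Once the $\Z/p$-lifts are handed to you, you take the normalized compositum, compute its generic different via Lemma \ref{genericdifferent}, compute the special different via Lemmas \ref{ramjumps} and \ref{higherconductorsp} (both equal $(p-1)\sum_{l=0}^{n-1}p^l(m_{l+1}+1)$), and invoke the different criterion to conclude good reduction. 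That is the entire sufficiency argument; your middle paragraph already contains it before you talk yourself out of it in the last paragraph.

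\textbf{Necessity.} Here there is a structural problem. The different criterion gives an \emph{equality} $d_\eta=d_s$ for any genuine lift, not merely an inequality, so you are discarding information. More seriously, applying this equality once to the full $(\Z/p)^n$-extension yields a single linear relation among all the unknown intersection cardinalities $|\cap_j B_{i_j}|$; that cannot pin them down individually, and your proposed ``lower bound $|\cap_j B_{i_j}|\ge\lceil\cdots\rceil$'' is asserted rather than proved (and note that in inclusion--exclusion the signs alternate, so lower bounds on intersections do not propagate cleanly to bounds on the union). The paper's argument is instead an \emph{induction} on $r$: the base case $r=2$ is Green--Matignon \cite[Theorem 5.1]{GM}, and for the inductive step one applies the different criterion not to the full extension but to the intermediate $\Z/p$-extension $K_1\cdots K_{l+1}/K_1\cdots K_l$. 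Its generic different, expanded by inclusion--exclusion, involves only intersections already known by induction plus the single new unknown $d=|B_1\cap\cdots\cap B_{l+1}|$; its special different is the conductor computed from Lemma \ref{ramjumps}. Equating the two gives one linear equation in one unknown, which solves to the stated value. The divisibility condition (1) then falls out from integrality of $d$.
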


\begin{proof}
First we show that the combinatorial conditions on the branch loci of lifts of $K_i$ are necessary. Suppose $k[[z]]/k[[t]]$ can be lifted to $R[[Z]]/R[[T]]$. Then so can all the intermediate extensions. We show that for any choice of the set $\{B_{i_1},\ldots,B_{i_r}\}$, $\displaystyle|\cap_{1\leq j\leq r}B_{i_j}|=\frac{\mathrm{min}_j(|B_{i_j}|)(p-1)^{r-1}}{p^{r-1}}$. The base case $r=2$ is shown in \cite[Theorem 5.1]{GM}. Suppose this is true for $r\leq l$, and consider the extension $K_0\cdots K_{l+1}/K_0$. Then the number of branch points in the lift of $K_0\cdots K_{l+1}/K_0\cdots K_l$ is $(p-1)p^l$ times the number of branch points in the lift of $K_{l+1}/K_0$ that are not in that of any $K_i/K_0$ for $1\leq i\leq l$. Write $d=|B_1\cap\ldots\cap B_{l+1}|$. Using Lemma \ref{genericdifferent}, the degree of the generic different of $K_0\cdots K_{l+1}/K_0\cdots K_l$ is given by
\begin{align*}
d_\eta&=(p-1)p^l\left(|B_{l+1}|+\sum_{r=1}^l(-1)^r\sum_{i=1}^{l-r+1}\sum_{i_j\geq i,\forall 2\leq j\leq r}|B_i\cap B_{i_2}\cap\ldots\cap B_{i_r}\cap B_{l+1}|\right)\\
&=(p-1)p^l\left(|B_{l+1}|-(p-1)p^{-1}\sum_{r=1}^{l-1}(-1)^{r-1}\sum_{i=1}^{l-r+1}\sum_{i_j\geq i,\forall j}|B_i\cap B_{i_2}\cap\ldots\cap B_{i_r}|+(-1)^ld\right)\\
&=(p-1)\biggl(p^l(m_{l+1}+1)-\Bigl(d_{\eta,K_0\cdots K_l/K_0}-(p-1)(-p)^{l-1}(m_1+1)(p-1)^{l-1}p^{1-l}\Bigr)+p^l(-1)^ld\biggr)\\
&=(p-1)\left(p^l(m_{l+1}+1)-\sum_{i=1}^l(p-1)p^{i-1}(m_i+1)+(-1)^{l-1}(p-1)^l(m_1+1)+p^l(-1)^ld\right).
\end{align*}

By the different criterion \cite[Section 3.4]{GM}, this equals the degree of special different, which in this case equals $p-1$ times the conductor of $K_0\cdots K_{l+1}/K_0\cdots K_l$. Recall from Lemma \ref{ramjumps} this is $$d_{s,K_0\cdots K_{l+1}/K_0\cdots K_l}=(p-1)\Bigl(p^lm_{l+1}-(p-1)\sum_{i=1}^lp^{i-1}m_i+1\Bigr).$$
Thus $(-1)^{l-1}(p-1)^l(m_1+1)+p^l(-1)^ld=0$.

Hence the $l+1$ lifts share $\displaystyle|B_1\cap\ldots\cap B_{l+1}|=d=\frac{(m_1+1)(p-1)^l}{p^l}=\frac{\mathrm{min}_i(|B_i|)(p-1)^{l+1-1}}{p^{l+1-1}}$ common branch points, proving that the conditions on the sets $B_i$ are necessary. Furthermore, since the number of common branch points is an integer, this also shows that the congruence conditions $m_i\equiv -1$ mod $p^{n-i}$ for $1\leq i\leq n-1$ are necessary.

Finally, we show that the conditions on the branch loci are sufficient.

We have from the beginning of the proof that the $l$-th lower ramification jump is $$ p^lm_{l+1}-(p-1)\sum_{1\leq i\leq l}p^{i-1}m_i.$$ Therefore, by Lemma \ref{ramjumps} the degree of the different of $k[[z]]/k[[t]]$ is
\begin{align*}
d_s&=\sum_{l=0}^{n-1}(m_1^{(l)}+1)(p-1)p^{n-l-1}\\
&=\sum_{l=0}^{n-1}(p-1)p^{n-l-1}\left(p^lm_{l+1}-(p-1)\sum_{i=1}^lp^{i-1}m_i+1\right)\\
&=(p-1)\sum_{l=0}^{n-1}p^l(m_{l+1}+1).
\end{align*}

The degree of the generic different of $k[[z]]/k[[t]]$ is $$ d_\eta=\sum_{l=0}^{n-1}(p-1)p^l(m_{l+1}+1)=d_s.$$

It thus follows from the different criterion \cite[Section 3.4]{GM} that $G$ lifts to a group of automorphisms of $R[[Z]]$.
\end{proof}

Theorem \ref{branchcyclecriterion} can also be stated in terms of the number of branch points with a certain monodromy subgroup, which does not require choosing the $n$ generating $\Z/p$-subcovers and counting common branch points. This leads to the next example.

\begin{example}
For a $(\Z/2)^3$-cover of type $(4,4,4)$, the criterion asserts that the non-identity elements of the group $(\Z/2)^3$ are in bijection with the branch points of any given lift, each generating the inertia group at exactly one point.
\end{example}

\begin{example}
More generally, for each element of $(\Z/2)^3$, we can write down how many branch points of the lift have stabilizers generated by that element.

Let $G=(\Z/2)^3$ with elements $\{1,a,b,c,ab,ac,bc,abc\}$, and consider $(\Z/2)^2$-subgroups of $G$, $G_1=\langle a,b\rangle,G_2=\langle a,c\rangle,G_3=\langle b,c\rangle$. Suppose $k[[z]]/k[[t]]$ is a $G$-extension of conductor type $(m_1+1,m_2+1,m_3+1)$ with respect to $G_1,G_2,G_3$. Let $C_1,C_2,C_3$ be lifts of the three generating $\Z/2$-subcovers $k[[z]]^{G_1}/k[[t]], k[[z]]^{G_2}/k[[t]],k[[z]]^{G_3}/k[[t]]$ respectively. Then for each $C_i$ and element $g\in G$, we can write down the numbers $m(g)$ of branch points of $C_i$ which have inertia group generated by $g$, as in the table below.

Here, $abc$ is the only element that is not in $G_i$ for any $i=1,2,3$, so the number of branch points shared by all $C_1,C_2,C_3$ is $m(abc)=\frac{m_1+1}{4}$. The element $bc$ is not in $G_1$ or $G_2$, so the number of branch points shared by $C_1$ and $C_2$ is $m(abc)+m(bc)=\frac{m_1+1}{2}$. Similarly, $ac$ is not in $G_1$ or $G_3$, so the number of branch points shared by $C_1$ and $C_3$ is $m(abc)+m(ac)=\frac{m_1+1}{2}$. Also, $ab$ is not in $G_2$ or $G_3$, so the number of branch points shared by $C_2$ and $C_3$ is $m(abc)+m(ab)=\frac{m_2+1}{2}$. This is exactly the combinatorial condition on the branch points of the lift in Theorem \ref{branchcyclecriterion}.
\end{example}
\begin{center}
\begin{tabular}{|c|c|c|c|}
    \hline
    \text{Elements} & $C_1$ & $C_2$ & $C_3$ \\
    \hline
    a & & & $m_3+1-\frac{m_2+1}{2}-\frac{m_1+1}{4}$\\
    \hline
    b & & $\frac{m_2+1}{2}-\frac{m_1+1}{4}$ & \\
    \hline
    c & $\frac{m_1+1}{4}$ & & \\
    \hline
    ab & & $\frac{m_2+1}{2}-\frac{m_1+1}{4}$ & $\frac{m_2+1}{2}-\frac{m_1+1}{4}$ \\
    \hline
    ac & $\frac{m_1+1}{4}$ & & $\frac{m_1+1}{4}$ \\
    \hline
    bc & $\frac{m_1+1}{4}$ & $\frac{m_1+1}{4}$ & \\
    \hline
    abc & $\frac{m_1+1}{4}$ & $\frac{m_1+1}{4}$ & $\frac{m_1+1}{4}$\\
    \hline
    Total & $m_1+1$ & $m_2+1$ & $m_3+1$\\
    \hline
\end{tabular}
\end{center}

\section{Coalescing of Branch Points}

In this section, we first recall a result in Pries-Zhu \cite{pries-zhu}, on stratification of the space of Artin-Schreier covers. We then give an interpretation of the result in terms of branch loci of lifts of these covers. We give a description of the coalescing behavior of the branch points of the lifts, which will be used in Section 5.

\subsection{Stratification of the Space of Artin-Schreier Covers}\label{pz}
Consider a smooth projective curve $X$ over $k$ of genus $g$. The \textit{$p$-rank} of $X$ is the integer $s$ such that the cardinality of $\mathrm{Jac}(X)[p](k)$ is $p^s$. We have that $0 \leq s \leq g$. For $g=1$, the $p$-rank is also called the \textit{Hasse invariant}.

Now let $X\to \P^1_k$ be an Artin-Schreier cover in characteristic $p$. Then $s = r(p-1)$ for some integer $r \geq 0$ \cite{pries-zhu}. We can study the stratification of $AS_g$, the moduli space of Artin-Schreier covers of genus $g$, by $p$-rank, into strata $AS_{g,s}$ consisting of covers with $p$-rank $s$. By the Riemann-Hurwitz formula, $2g-2 = p(-2)+\mathrm{deg}(D)$, where $D$ is the divisor carved out by the different, called the ramification divisor. As we will show below, $g=d(p-1)/2$ for some integer $d$. Assume $g\geq 1$. Then we have the following result:
    
\begin{theorem}[Pries-Zhu, 2010]\label{pries-zhu}
\begin{enumerate}
\item The set of irreducible components of $AS_{g,s}$ is in natural bijection with the set of partitions $[e_1,...,e_{r+1}]$ of $d+2$ into $r+1$ positive integers such that each $e_j \not\equiv 1$ mod $p$.
\item The irreducible component of $AS_{g,s}$ for the partition $[e_1,...,e_{r+1}]$ has dimension $d-1-\sum_{j=1}^{r+1}\lfloor(e_j-1)/p\rfloor$.
\end{enumerate}
\end{theorem}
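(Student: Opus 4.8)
The plan is to stratify $AS_{g,s}$ by the ramification data of the Artin--Schreier cover, show each stratum is irreducible of the asserted dimension, and then show no stratum lies in the closure of another; the irreducible components are then exactly the closures of the strata, indexed by the partitions as stated.

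First I would record the numerical constraints. An Artin--Schreier cover $Y\to\P^1_k$ is given by $y^p-y=f$ with $f\in k(x)$, and after Artin--Schreier reduction $f$ has poles of orders $d_1,\ldots,d_{r+1}$, each prime to $p$, at its branch points $P_1,\ldots,P_{r+1}$. The conductor--discriminant formula (equivalently Riemann--Hurwitz) gives $2g-2=-2p+(p-1)\sum_j(d_j+1)$, so writing $d=2g/(p-1)$ one gets $\sum_j e_j=d+2$ with $e_j:=d_j+1\not\equiv 1\pmod p$; and the Deuring--Shafarevich formula gives $s=r(p-1)$, so the number $r+1$ of branch points is determined by $s$. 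Hence sending a cover to the multiset $\{e_1,\ldots,e_{r+1}\}$ is a map from $AS_{g,s}$ to the set of partitions of $d+2$ into $r+1$ parts each $\not\equiv 1 \pmod p$; I would let $AS_{\vec e}$ denote the (locally closed) fibre over a partition $\vec e$.

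Next I would show each $AS_{\vec e}$ is irreducible of dimension $d-1-\sum_j\lfloor (e_j-1)/p\rfloor$ by parametrizing it. Choose $r+1$ distinct points $P_j\in\P^1_k$ and, at each, a ``normalized principal part'' $f_j$: a rational function with a single pole at $P_j$ of exact order $d_j$ and no polar term of order divisible by $p$ (so $d_j-\lfloor d_j/p\rfloor$ coefficients, the top one nonzero); set $f=\sum_j f_j$. Since each $d_j$ is prime to $p$, any such $f$ defines a connected cover in $AS_{\vec e}$ (so the stratum is nonempty), and every cover in $AS_{\vec e}$ arises this way modulo $\mathrm{PGL}_2$ (moving the $P_j$, transporting the $f_j$) and a finite group (permuting equal parts; scaling $y$ by $\F_p^{\times}$). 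Thus $AS_{\vec e}$ is the image of an open subscheme of $(\P^1_k)^{r+1}\times\A^N$ with $N=\sum_j(d_j-\lfloor d_j/p\rfloor)$, hence irreducible; and since the $\mathrm{PGL}_2$-action on configurations-times-coefficients is generically free --- already on configurations when $r+1\ge 3$, and via the positive-dimensional branch-locus stabilizer acting faithfully on coefficients when $r+1\le 2$ --- its dimension is $(r+1)+N-3$, which rearranges to $d-1-\sum_j\lfloor (e_j-1)/p\rfloor$ using $\sum_j(d_j+1)=d+2$.

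Finally I would show the strata do not specialize to one another, so that the $\overline{AS_{\vec e}}$ are precisely the irreducible components (every component being the closure of the stratum through its generic point). In an irreducible family inside $AS_{g,s}$ the branch points cannot collide, since a collision lowers $r$ and hence $s$ by Deuring--Shafarevich; along each of the $r+1$ disjoint branch sections the reduced conductor $d_j$ is lower semicontinuous (after Artin--Schreier reduction it is the largest order, prime to $p$, of a nonvanishing polar coefficient, so it can only drop under specialization), while $\sum_j d_j$ is pinned down by Riemann--Hurwitz; hence every $d_j$ is constant along the family. Thus each $AS_{\vec e}$ is open and closed in $AS_{g,s}$, and $\overline{AS_{\vec e}}=AS_{\vec e}$ is a component, giving the stated bijection and dimension. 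I expect the main obstacles to be the generic finiteness of the parametrization in the second step (together with a clean treatment of the few-branch-point cases), and the semicontinuity of the reduced conductor in families used in the last step --- the place where one must handle Artin--Schreier reduction uniformly over a base.
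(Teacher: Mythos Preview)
The paper does not prove this theorem. Theorem~\ref{pries-zhu} is quoted from Pries--Zhu \cite{pries-zhu} and used as input; the surrounding text only makes the bijection in part~(1) explicit by reading off the partition $[e_1,\ldots,e_{r+1}]$ from the pole orders of an Artin--Schreier equation and checking via Deuring--Shafarevich and Riemann--Hurwitz that $r+1$ and $\sum e_j$ come out right. There is no argument in the paper for irreducibility of the strata, for the dimension formula, or for why distinct partitions give distinct components, so there is no ``paper's own proof'' to compare your proposal against.

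That said, your sketch is a reasonable outline of the Pries--Zhu argument itself: parametrize by branch points plus normalized principal parts, quotient by $\mathrm{PGL}_2$ and the finite symmetries, and count dimensions. Your dimension count $(r+1)+\sum_j(d_j-\lfloor d_j/p\rfloor)-3=d-1-\sum_j\lfloor(e_j-1)/p\rfloor$ is correct. The two places you flag as delicate are exactly the right ones. For the last step, your semicontinuity claim is slightly off as stated: under specialization the reduced pole order can go \emph{up} (a coefficient that was an Artin--Schreier coboundary generically may cease to be one), not down, so ``lower semicontinuous'' should be ``upper semicontinuous'' in your sense; but the conclusion survives because the sum $\sum d_j$ is pinned by the genus, forcing each $d_j$ to be constant. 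You should also be careful that the $r+1$ branch sections stay disjoint and that Artin--Schreier reduction can be done coherently over the base --- this is the content of Pries--Zhu's configuration-space construction and is where most of the work in their paper lies.
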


In fact, the bijection in part 1 can be given explicitly. Since $k$ is algebraically closed, after some automorphism of $\P^1_k$, we can assume that $f$ is not branched at $\infty$. Thus $f: X\to \P^1_k$ is given by an equation $\displaystyle y^p-y=\sum_{i=1}^{n}f_i(\frac{1}{x-c_i})$, where $f_i$ are polynomials over $k$ of degrees not divisible by $p$ (in particular $\mathrm{deg}(f_i)>0$), and $c_i\in k$ are distinct. This cover is branched at $n$ points, $\{c_1,\ldots,c_n\}$, and any Artin-Schreier cover branched at these points is of the above form. Let $s$ be the $p$-rank of $f$. The Deuring-Shafarevich theorem \cite[Theorem 4.1]{subrao} states that, for $X\xrightarrow{\Z/p}Y$ over $k$, $s_X-1=p(s_Y-1)+n(p-1)$, where $n$ is the number of branch points on $Y$. Here the $p$-rank of $Y=\P^1_k$ is $0$. Therefore $s=(n-1)(p-1)$, and $n=r+1$, with $r$ defined as above.

Let $e_i=\mathrm{deg}(f_i)+1$. Then $e_i\not\equiv 1$ mod $p$. By the Riemann-Hurwitz formula, \cite[IV.2, Proposition 4]{serre} and Remark \cite[IV.2, Exercise 5b]{serre},
\begin{align*}
2g-2&=p(0-2)+\mathrm{deg}(D)=-2p+\sum_{i=1}^{r+1}\sum_{j=0}^\infty (|G^i_j|-1)=-2p+\sum_{i=1}^{r+1}e_i(p-1),
\end{align*}
where $G^i_j$ is the $j$-th lower ramification group for the local extension at the branch point $c_i$.

Thus $\displaystyle g=(p-1)(\sum_{i=1}^{r+1}e_i-2)/2=d(p-1)/2$ for an integer $d$, so $\displaystyle\sum_{i=1}^{r+1}e_i=d+2$ and $[e_1,\ldots,e_{r+1}]$ is a partition of $d+2$.

\subsection{Coalescing of Branch Points of a Lift}\label{coalesce}
Recall that by the Oort conjecture, every $\Z/p$-cover lifts. In this subsection we study how the branch points of the lift coalesce on the special fiber.

\begin{theorem}\label{branchcoalesce}
With the above notation, consider the component of $AS_{g,s}$ of an Artin-Schreier cover $f: X\to \P^1_k$ with $p$-rank $s$ which corresponds to the partition $[e_1,...,e_{r+1}]$ of $d+2$, with each $e_j \not\equiv 1$ mod $p$. Suppose $f$ is branched at $\{c_1,\ldots, c_{r+1}\}$, given by an equation of the form $\displaystyle y^p-y=\sum_{i=1}^{r+1}f_i(\frac{1}{x-c_i})$, where $e_i=\mathrm{deg}(f_i)+1$. Then there exists a lift of $f$ to $R$ whose generic fiber is a degree $p$ Kummer cover with $d+2$ branch points, $e_i$ of which coalesce to $c_i$ on $\P^1_k$ for $1\leq i\leq r+1$.

Conversely, any lift of $f$ is a $\Z/p$-cover with $d+2$ branch points, $e_i$ of which coalesce to $c_i$ on $\P^1_k$ for $1\leq i\leq r+1$.
\end{theorem}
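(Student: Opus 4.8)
The plan is to reduce both directions to the local lifting problem via the local--global principle \cite{garuti}, after first pinning down the local picture. Since $f$ is given by $y^p-y=\sum_{i=1}^{r+1}f_i(1/(x-c_i))$ with $\deg f_i=e_i-1$, and each summand $f_j(1/(x-c_j))$ with $j\neq i$ is regular at $c_i$, the completion of $f$ at $c_i$ is a $\Z/p$-extension $k[[z_i]]/k[[t_i]]$ of conductor $\deg f_i+1=e_i$; here $e_i\not\equiv 1\bmod p$ forces $\deg f_i$ to be prime to $p$, so the leading term cannot be absorbed by an Artin--Schreier substitution and the conductor is genuinely $e_i$. By Lemma \ref{ramjumps} with $n=1$ this extension has special different of degree $(p-1)e_i$. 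On the other hand, the generic fiber of any local lift is in characteristic $0$, hence tamely and totally ramified of degree $p$ over each of its branch points, so its generic different has degree $(p-1)$ times the number of branch points; the different criterion \cite[Section~3.4]{GM} then forces any local lift of $k[[z_i]]/k[[t_i]]$ to have exactly $e_i$ branch points.

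For the existence direction I would lift $\P^1_k$ to $\P^1_R$, choose disjoint $R$-sections $\tilde c_1,\dots,\tilde c_{r+1}$ reducing to $c_1,\dots,c_{r+1}$, and invoke the Oort conjecture for $\Z/p$ (Oort--Sekiguchi--Suwa \cite{OSS}) to lift each local extension $k[[z_i]]/k[[t_i]]$ over the completion of $\P^1_R$ along $\tilde c_i$. By Garuti's local--global principle \cite{garuti}, after enlarging $R$ so all these are defined over a common base, the local lifts glue to a global $\Z/p$-cover $X_R\to\P^1_R$ lifting $f$. Its branch locus is the union of the $r+1$ local branch loci, hence consists of $\sum_i e_i=d+2$ geometric branch points, exactly $e_i$ of which reduce to $c_i$. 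Finally $X_K\to\P^1_K$ is a $\Z/p$-cover in characteristic $0$, hence tame, hence (after enlarging $R$ to contain $\mu_p$) a Kummer cover; since $g(X_K)=g(X)=d(p-1)/2$, the characteristic-$0$ Riemann--Hurwitz formula independently confirms the count of $d+2$ branch points.

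For the converse, let $X_R\to\P^1_R$ be any lift of $f$. Its generic fiber is a tame $\Z/p$-cover of $\P^1_K$, hence a Kummer cover, and since $g(X_K)=d(p-1)/2$ the Riemann--Hurwitz formula gives exactly $d+2$ branch points. To locate their specializations, note that $X_R\to\P^1_R$ is finite flat, so it is étale over an open neighborhood of a $k$-point $P$ of the special fiber precisely when its fiber over $P$ is étale; but that fiber is canonically the fiber of $f$ over $P$, which is étale exactly when $P\notin\{c_1,\dots,c_{r+1}\}$. Hence the branch divisor $B_R\subset\P^1_R$, which is finite flat over $R$ of degree $d+2$, has special fiber supported precisely on $\{c_1,\dots,c_{r+1}\}$, and (after enlarging $R$ so that each branch point is $R$-rational) the number of generic branch points specializing to $c_i$ equals the multiplicity of $c_i$ in $B_R\otimes k$. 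Completing $\P^1_R$ along $c_i$ exhibits these as the branch points of a local lift of the conductor-$e_i$ extension $k[[z_i]]/k[[t_i]]$, which by the first paragraph are exactly $e_i$ in number; since $\sum_i e_i=d+2$, this accounts for every branch point, so exactly $e_i$ coalesce to $c_i$.

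The main obstacle is the bookkeeping in the converse: one must argue that no branch point of the generic fiber can specialize away from the wild ramification points of $f$, and that the number landing on each $c_i$ is dictated purely by the local special different through Lemma \ref{ramjumps} and the different criterion, so that branch points are neither created nor lost in the limit. The patching input \cite{garuti} and the Oort conjecture for $\Z/p$ \cite{OSS} are used as black boxes, and the remaining arithmetic (Riemann--Hurwitz and the different count) is routine.
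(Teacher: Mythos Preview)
Your proposal is correct and follows essentially the same approach as the paper's proof: both localize at each $c_i$, observe that the terms $f_j(1/(x-c_j))$ for $j\neq i$ are units in $k[[x-c_i]]$ and hence removable by an Artin--Schreier substitution, apply the Oort conjecture for $\Z/p$ to each local extension, use the different criterion to force exactly $e_i$ local branch points, and patch via the local--global principle (the paper cites \cite[Theorem~2.2]{CGH} rather than \cite{garuti}, but these play the same role). Your converse is slightly more explicit than the paper's in arguing via \'etaleness on fibers that no generic branch point can specialize away from $\{c_1,\dots,c_{r+1}\}$, but the substance is identical.
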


\begin{proof}
Localizing at each branch point of $f$, we get $r+1$ local extensions, of $k[[x-c_i]], 1\leq i \leq r+1$. Since $x-c_j$ is a unit in $k[[x-c_i]]$ for all $j\neq i$, $\frac{1}{x-c_j}\in k[[x-c_i]]$ and thus $f_j(\frac{1}{x-c_j})\in k[[x-c_i]]$. Then there exists an element $z=-(f_j+f_j^p+f_j^{p^2}+\cdots)$ in $k[[x-c_i]]$ such that $z^p-z=f_j$. Therefore, after a change of variables, the local extension of $k[[x-c_i]]$ is given generically by $\displaystyle y^p-y=f_i(\frac{1}{x-c_i})$.

By the Oort conjecture, after possibly extending $R$, we can lift these local covers, which gives us branched covers of $\mathrm{Spec}\,R[[x-c_i]]$, branched at $b_i$ points on the generic fiber, for some $b_i>0$, all coalescing at $c_i$. By the different criterion \cite[Section 3.4]{GM}, the generic different, $b_i(p-1)$, is equal to the special different, $(\mathrm{deg}(f_i)+1)(p-1)$, so $b_i=\mathrm{deg}(f_i)+1=e_i$. By the proof of Theorem 2.2 in \cite{CGH}, we can patch these local lifts together to get a smooth $\Z/p$-cover $X_R\to \P^1_R$, with $e_i$ branch points coalescing to the point $c_i$ on $\P^1_k$.

Let $X_K\to \P^1_K$, branched at $m$ points, be the generic fiber of the lift $X_R\to \P^1_R$. Then by the Riemann-Hurwitz formula and flatness of $X_R\to \P^1_R$, $(m-2)(p-1)/2=g_{X_K}=g_{X}=d(p-1)/2$, so $\displaystyle m=\sum_{i=1}^{r+1}e_i=d+2$.

Now we prove the converse. Suppose $F: X_R\to\P^1_R$ is a lift of $f$. Localizing $\P^1_R$ at the closed point $c_i\in\P^1_k$, for $1\leq i\leq r+1$, we get the inclusion $\mathrm{Spec}\,\hat{\mathcal{O}}_{\P^1_R,c_i}\to \P^1_R$. Now taking its fiber product with $F$, we get an extension $R[[z]]$ of $R[[x-c_i]]$ branched at only those branch points of $F$ coalescing at $c_i$. Suppose there are $n_i$ of them.

The reduction of $R[[z]]/R[[x-c_i]]$ is an extension of $k[[x-c_i]]$ given generically by $\displaystyle y^p-y=f_i(\frac{1}{x-c_i})$, as shown above. Again, by the different criterion, $R[[z]]/R[[x-c_i]]$ has to be branched at $\mathrm{deg}(f_i)+1=e_i$ points. Therefore, $n_i=e_i\not\equiv 1$ mod $p$.
\end{proof}

\begin{remark}
We can therefore interpret Theorem \ref{pries-zhu} as a description of $K$-covers $f:X\to \P^1_K$ with good reduction, in terms of how their branch points coalesce on the special fiber. Namely, if $X_R\to\P^1_R$ is the smooth model of $f$, then $e_i$ points on $\P^1_R$ coalesce to the $i$-th branch point on the special fiber. Moreover, let $\mathcal{H}_{m,p}$ be the space of $p$-covers of $\P^1_K$ branched at $m$ points, and let $\mathcal{H}^{good}_{m,p}$ be the subspace of $\mathcal{H}_{m,p}$ consisting of those covers having good reduction. Then we get a stratification of $\mathcal{H}^{good}_{m,p}$ into strata $\mathcal{H}^{good}_{m,p,n}$ of covers whose reduction have $n$ branch points. This can also be used to find criteria for potentially good reduction for covers in charateristic $0$ with general branch locus geometry, which would generalize the criterion in \cite{lehr}.
\end{remark}

\begin{remark}
Part 2 of Theorem \ref{pries-zhu} can be used to describe the strata $\mathcal{H}^{good}_{m,p,n}$ in characteristic $0$. Since we construct lifts to $R$ by lifting the coefficients of the defining polynomials for the covers over $k$, the component of $\mathcal{H}^{good}_{m,p,n}$ consisting of covers with branch locus partition $[e_1,\ldots, e_n]$, where $e_i$ points coalesce to one point for each $i$, is a $p$-adic neighborhood of a subvariety of $\mathcal{H}^{good}_{m,p}$ that has dimension equal to $m-3-\sum_{i=1}^n\lfloor(e_i-1)/p\rfloor$.
\end{remark}

\begin{corollary}\label{even coalesce}
Let $f:X\to \P^1_R$ be a lift of a $\Z/2$-cover of $\P^1_k$. Then the number of branch points of $f$ coalescing to one point over $k$ is even.
\end{corollary}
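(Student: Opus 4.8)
The plan is to specialize Theorem \ref{branchcoalesce} to the case $p=2$. For a $\Z/2$-cover $f$, the Deuring–Shafarevich computation in Section \ref{pz} gives that $f$ has some number $r+1$ of branch points, and each local extension at a branch point $c_i$ has conductor $e_i = \deg(f_i)+1$, where the associated partition data $[e_1,\ldots,e_{r+1}]$ of $d+2$ satisfies $e_j \not\equiv 1 \pmod p$. With $p=2$, the congruence $e_j \not\equiv 1 \pmod 2$ says precisely that each $e_j$ is even.

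First I would invoke the converse part of Theorem \ref{branchcoalesce}: given any lift $f:X\to\P^1_R$ of a $\Z/2$-cover of $\P^1_k$, the branch points of the generic fiber coalesce on the special fiber so that exactly $e_i$ of them come together at the $i$-th branch point $c_i$, where $e_i = \deg(f_i)+1$ is the conductor of the local extension of $k[[x-c_i]]$. Then the second step is purely arithmetic: since $p=2$, we have $e_i \not\equiv 1 \pmod 2$, i.e.\ $e_i$ is even. Hence the number of branch points coalescing to any single point over $k$ is even, which is exactly the claim.

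I do not expect a genuine obstacle here: the corollary is essentially a one-line consequence of Theorem \ref{branchcoalesce} together with the observation that ``$\not\equiv 1 \bmod 2$'' means ``even.'' The only thing to be mildly careful about is making sure the statement is read correctly — it concerns branch points coalescing to a \emph{single} point of the special fiber, not the total number of branch points of $f$ (that total is $d+2$, whose parity is $\sum e_i \bmod 2 = 0$, so it is also even, but that is a weaker statement and not what is being asserted). One should also note the degenerate possibility that $f$ itself is unramified, in which case there are no branch points and the statement holds vacuously; otherwise every branch point of the special fiber has conductor at least $2$, consistent with evenness.
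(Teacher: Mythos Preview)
Your proposal is correct and matches the paper's own proof essentially verbatim: invoke the converse part of Theorem~\ref{branchcoalesce} to identify the number of coalescing branch points as $e_i$, then observe that for $p=2$ the condition $e_i\not\equiv 1\pmod 2$ means $e_i$ is even. The paper's proof is the same one-liner, without the extra remarks about the total count or the unramified edge case.
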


\begin{proof}
By the above theorem, the number of branch points of $f$ coalescing to the $i$-th branch point on the special fiber is $e_i\not\equiv 1$ mod $2$, i.e. $e_i$ is even.
\end{proof}

We can apply this theorem to $\Z/2$-covers in characteristic $2$, and look at how branch points of lifts of elliptic covers coalesce on the special fiber.

\begin{example}
Let $p=2$, and $g=1$. Consider $X\to \P^1_k$, where $X$ is an elliptic curve. Then $d=4$ in the notation of Section \ref{pz}. The space of elliptic curves is parameterized by the $j$-line, where elliptic curves of $j$-invariant $0$ have $2$-rank $s=0$, and elliptic curves of $j$-invariant non-zero have $2$-rank $s=1$.

Case 1: $s=0$, $r=s/(p-1)=0$. Then $X\to P^1_k$ is branched at $r+1=1$ point, corresponding to the partition $(4)$ of $4$ into $1$ even integer. Any lift $X_R\to \P^1_R$ has $4$ branch points, all of which coalesce to one point on the special fiber. $X_R$ has $j$-invariant $j\in \mathfrak{m}$, and is in the subspace $\mathcal{H}^{good}_{4,2,1}$ of $\mathcal{H}^{good}_{4,2}\cong \mathbb{A}^1_R$ defined by $v(j)>0$.

Case 2: $s=1$, $r=s/(p-1)=1$. Then $X\to P^1_k$ is branched at $r+1=2$ points, corresponding to the partition $(2,2)$ of $4$ into $2$ even integers. Any lift $X_R\to \P^1_R$ has two pairs of $2$ branch points, each pair of which coalesce to one point on the special fiber. $X_R$ has $j$-invariant $j\in R^*$, and is in the subspace $\mathcal{H}^{good}_{4,2,2}$ of $\mathcal{H}^{good}_{4,2}\cong \mathbb{A}^1_R$ defined by $v(j)=0$.
\end{example}

\section{Lifts of $(\Z/2)^3$-Covers}

In this section, we apply results in the previous two sections to construct explicit lifts for $(\Z/2)^3$-covers of various conductor types. We first use Mitchell's classification \cite{mitchell} to show that covers of type $(4,4,4)$ can be lifted only with equidistant geometry. (Here, we say that a branch locus $B$ has equidistant geometry if $d_p(b_i - b_j ) = \rho$, with $\rho \geq 0$
fixed, for all pairs of distinct branch points $b_i, b_j \in B$.) Then we construct lifts for all covers of non-constant conductor type $(4,4,2r)$, $r\geq 3$, with certain branch locus geometry, and show that the lifts can never be equidistant.

\subsection{Hurwitz Trees for $(\Z/2)^3$-Covers of Type $(4,4,4)$}\label{444}

\begin{definition}
Let $T$ be a rooted tree, where the root node $v_0$ is connected to one other node $v_1$. Define a partial ordering on vertices $w$ of $T$, by inclusion of paths from $v_0$ to $w$. For each vertex $w_i$ of $T$ connected to $v_1$ that is not $v_0$, consider the subtree consisting of vertices greater than or equal to $w_i$, with $w_i$ as the root of the subtree. We call this subtree of $T$ \textit{the $i$-th branch of $T$}. A branch has \textit{size} $b_i$ if it contains $b_i$ leaf (terminal) nodes.
\end{definition}

For a cover $R[[Z]]/R[[T]]$, we can build a rooted tree, called the \textit{Hurwitz tree}, from the dual graph $\Gamma$ of the semi-stable reduction $X_k$. For a precise description of a Hurwitz tree, as a rooted metric tree, with associated characters and differential data, see \cite{BW}.

Vertices and edges of $\Gamma$ correspond to irreducible components and nodes of $X_k$, and there is an edge between two vertices if and only if their corresponding irreducible components intersect. Next, we append a vertex $v_0$, connected via an edge $e_0$, to the vertex $v_1$ corresponding to the component $\infty$ specializes to. Call this the \textit{root node} of the Hurwitz tree. Finally, for each $b_i\in B$, append a vertex $x_i$, via an edge $e_i$, to the vertex $w_j$, corresponding to the component $b_i$ specializes to. Call these the \textit{leaf nodes} of the Hurwitz tree.

\begin{remark}
Branch points corresponding to leaves in the same branch are $p$-adically closer to each other than they are to branch points corresponding to leaves in other branches.
\end{remark}

In order to simplify the notations, we will only consider the sizes of the branches of a Hurwitz tree, ignoring further structure of the tree. In each particular case, we will specify whether the leaves in a branch are equidistant, or there is further branching.

\begin{definition}
We say that a Hurwitz tree has \textit{branch partition} $(e_1,\ldots,e_k)$, if there are $k$ branches, with the $i$-th branch having size $e_i$. We say that a characteristic $0$ cover has \textit{branch locus geometry} $(e_1,\ldots, e_k)$ if its Hurwitz tree has branch partition $(e_1,\ldots,e_k)$, whose leaves correspond to branch points of the cover.
\end{definition}

We first introduce the classification of Hurwitz trees for $(\Z/2)^2$-covers of type $(4,4)$ by Mitchell.

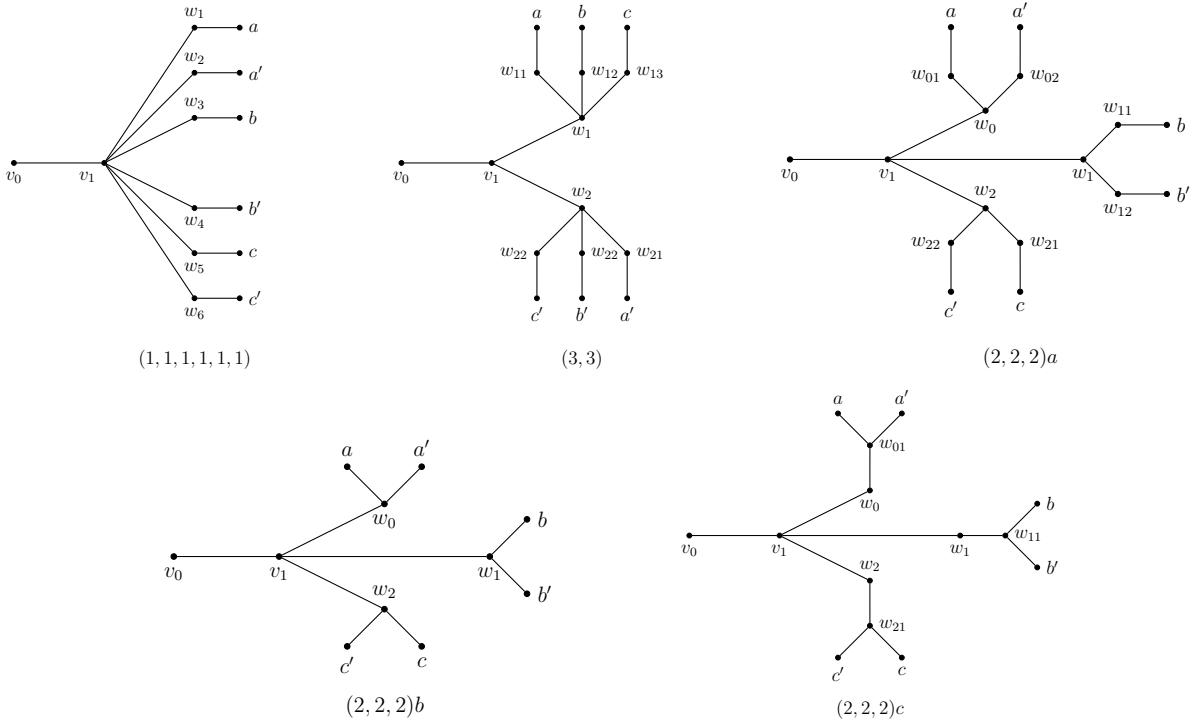
\begin{figure}[h]
\begin{center}

\begin{tikzpicture}[node distance = 1cm, scale=0.8, every node/.style={scale=0.6}]
	\tikzstyle{main node}=[draw,circle,fill=black,minimum size=3pt,
	inner sep=0pt]
	\tikzstyle{empty node} = [draw, color=white, fill=white,opacity = 0]
	
	\node[main node] (v0) [label=below:$v_0$] {};
	\node[empty node] (emp) [right of =v0] {};
	\node[main node] (v1) [right of =emp,label=below left :$v_1$] {};
	\node[empty node] (emp1) [right of = v1] {};
	\node[empty node] (emp2) [right of = emp1] {};
	\node[main node] (w3) [above of=emp2, label=above: $w_3$] {};
	\node[main node] (w2) [above of=w3,label=above: $w_2$] {};
	\node[main node] (w1) [above of=w2,label=above: $w_1$] {};
	\node[main node] (w4) [below of=emp2,label=below: $w_4$] {};
	\node[main node] (w5) [below of=w4,label=below: $w_5$] {};
	\node[main node] (w6) [below of=w5, label=below: $w_6$] {};
	\node[main node] (a) [right of= w1, label=right: $a$] {};
	\node[main node] (a') [right of= w2, label=right: $a'$] {};
	\node[main node] (b) [right of= w3, label=right: $b$] {};
	\node[main node] (b') [right of= w4, label=right: $b'$] {};
	\node[main node] (c) [right of= w5, label=right: $c$] {};
	\node[main node] (c') [right of= w6, label=right: $c'$] {};
	
	\draw (v0) -- (v1); 
	\draw (v1) -- (w1);
	\draw (v1) -- (w2);
	\draw (v1) -- (w3);
	\draw (v1) -- (w4);
	\draw (v1) -- (w5);
	\draw (v1) -- (w6);
	\draw (w1) -- (a);
	\draw (w2) -- (a');
	\draw (w3) -- (b);
	\draw (w4) -- (b');
	\draw (w5) -- (c);
	\draw (w6) -- (c');
	
	\node [below=1cm, align=flush center,text width=8cm] at (w6)
        {
            $(1,1,1,1,1,1)$
        };
	
\end{tikzpicture}
\hspace{-0.2cm}
\begin{tikzpicture}[node distance = 1cm, scale=0.8, every node/.style={scale=0.6}]
	\tikzstyle{main node}=[draw,circle,fill=black,minimum size=3pt,
	inner sep=0pt]
	\tikzstyle{empty node} = [draw, color=white, fill=white,opacity = 0]
	
	\node[main node] (v0) [label=below:$v_0$] {};
	\node[empty node] (emp) [right of=v0] {};
	\node[main node] (v1) [right of =emp,label=below:$v_1$] {};
	\node[empty node] (emp1) [right of=v1] {};
	\node[empty node] (emp2) [right of=emp1] {};
	\node[empty node] (emp3) [right of=emp2] {};
	\node[main node] (w1) [above of =emp2,label=below:$w_1$] {};
	\node[main node] (w12) [above of = w1, label= right: $w_{12}$] {};
	\node[main node] (w11) [left of =w12,label=left:$w_{11}$] {};
	\node[main node] (w13) [right of =w12,label=right:$w_{13}$] {};
	\node[main node] (w2) [below of =emp2,label=above:$w_2$] {};
	\node[main node] (w22) [below of = w2, label= right: $w_{22}$] {};
	\node[main node] (w21) [right of =w22,label=right:$w_{21}$] {};
	\node[main node] (w23) [left of =w22, label=left:$w_{22}$] {};
	\node[main node] (a) [above of = w11, label=above: $a$] {};
	\node[main node] (b) [above of =w12, label=above:$b$] {};
	\node[main node] (c) [above of = w13, label=above: $c$] {};
	\node[main node] (a') [below of = w21, label=below: $a'$] {};
	\node[main node] (b') [below of =w22, label=below:$b'$] {};
	\node[main node] (c') [below of = w23, label=below: $c'$] {};
	
	\draw (v0) -- (v1);
	\draw (v1) -- (w1);
	\draw (v1) -- (w2);
	\draw (w1) -- (w11);
	\draw (w1) -- (w12);
	\draw (w1) -- (w13) ;
	
	\draw (w2) -- (w21);
	\draw (w2) -- (w22);
	\draw (w2) -- (w23);
	\draw (w11) -- (a);
	\draw (w12) -- (b);
	\draw (w13) -- (c);
	\draw (w21) -- (a');
	\draw (w22) -- (b'); 
	\draw (w23) -- (c'); 
	
	\node [below=2cm, align=flush center,text width=8cm] at (w22)
        {
            $(3,3)$
        };
\end{tikzpicture}
\hspace{-0.2cm}
\begin{tikzpicture}[node distance = 1cm, scale=0.8, every node/.style={scale=0.65}]
	\tikzstyle{main node}=[draw,circle,fill=black,minimum size=3pt,
	inner sep=0pt]
	\tikzstyle{empty node} = [draw, color=white, fill=white,opacity = 0]
	
	\node[main node] (v0) [label=below:$v_0$] {};
	\node[empty node] (emp) [right of=v0] {};
	\node[main node] (v1) [right of =emp,label=below:$v_1$] {};
	\node[empty node] (emp1) [right of=v1] {};
	\node[empty node] (emp2) [right of=emp1] {};
	\node[empty node] (emp3) [right of=emp2] {};
	\node[main node] (w0) [above of =emp2,label=below:$w_0$] {};
	\node[main node] (w01) [above left of =w0,label=left:$w_{01}$] {};
	\node[main node] (w02) [above right of =w0,label=right:$w_{02}$] {};
	\node[main node] (w1) [right of =emp3,label=below:$w_1$] {};
	\node[main node] (w11) [above right of =w1,label=above:$w_{11}$] {};
	\node[main node] (w12) [below right of =w1,label=below:$w_{12}$] {};
	\node[main node] (w2) [below of =emp2,label=above:$w_2$] {};
	\node[main node] (w21) [below right of =w2,label=right:$w_{21}$] {};
	\node[main node] (w22) [below left of =w2, label=left:$w_{22}$] {};
	\node[main node] (a) [above of = w01, label=above: $a$] {};
	\node[main node] (a') [above of = w02, label=above: $a'$] {};
	\node[main node] (b) [right of = w11, label=right: $b$] {};
	\node[main node] (b') [right of = w12, label=right: $b'$] {};
	\node[main node] (c) [below of = w21, label=below: $c$] {};
	\node[main node] (c') [below of = w22, label=below: $c'$] {};
	
	\draw (v0) -- (v1);
	\draw (v1) -- (w0);
	\draw (v1) -- (w1);
	\draw (v1) -- (w2);
	\draw (w0) -- (w01);
	\draw (w0) -- (w02); 
	\draw (w1) -- (w11);
	\draw (w1) -- (w12);
	\draw (w2) -- (w21);
	\draw (w2) -- (w22);
	\draw (w01) -- (a);
	\draw (w02) -- (a');
	\draw (w11) -- (b);
	\draw (w12) -- (b');
	\draw (w21) -- (c);
	\draw (w22) -- (c');
	
	\node [below=2cm, align=flush center,text width=8cm] at (w21)
        {
            $(2,2,2)a$
        };
\end{tikzpicture}
\end{center}

\begin{center}
\begin{tikzpicture}[node distance = 1cm, scale=0.8, every node/.style={scale=0.7}]
	\tikzstyle{main node}=[draw,circle,fill=black,minimum size=3pt,
	inner sep=0pt]
	\tikzstyle{empty node} = [draw, color=white, fill=white,opacity = 0]
	
	\node[main node] (v0) [label=below:$v_0$] {};
	\node[empty node] (emp) [right of=v0] {};
	\node[main node] (v1) [right of =emp,label=below:$v_1$] {};
	\node[empty node] (emp1) [right of=v1] {};
	\node[empty node] (emp2) [right of=emp1] {};
	\node[empty node] (emp3) [right of=emp2] {};
	\node[main node] (w0) [above of =emp2,label=below:$w_0$] {};
	
	\node[main node] (w1) [right of =emp3,label=below:$w_1$] {};
	\node[main node] (w2) [below of =emp2,label=above:$w_2$] {};
	\node[main node] (a) [above left of = w0, label=above: $a$] {};
	\node[main node] (a') [above right of = w0, label=above: $a'$] {};
	\node[main node] (b) [above right of = w1, label=right: $b$] {};
	\node[main node] (b') [below right of = w1, label=right: $b'$] {};
	\node[main node] (c) [below right of = w2, label=below: $c$] {};
	\node[main node] (c') [below left of = w2, label=below: $c'$] {};
	
	\draw (v0) -- (v1);
	\draw (v1) -- (w0);
	\draw (v1) -- (w1);
	\draw (v1) -- (w2);
	\draw (w0) -- (a);
	\draw (w0) -- (a');
	\draw (w1) -- (b);
	\draw (w1) -- (b');
	\draw (w2) -- (c);
	\draw (w2) -- (c');
	
	\node [below=1.5cm, align=flush center,text width=8cm] at (w2)
        {
            $(2,2,2)b$
        };
\end{tikzpicture}
\qquad
\begin{tikzpicture}[node distance = 1cm, scale=0.8, every node/.style={scale=0.6}]
	\tikzstyle{main node}=[draw,circle,fill=black,minimum size=3pt,
	inner sep=0pt]
	\tikzstyle{empty node} = [draw, color = white, fill = white, opacity = 0]
	
	\node[main node] (v0) [label=below:$v_0$] {};
	\node[empty node] (emp) [right of=v0] {};
	\node[main node] (v1) [right of =emp,label=below:$v_1$] {};
	\node[empty node] (emp1) [right of=v1] {};
	\node[empty node] (emp2) [right of=emp1] {};
	\node[empty node] (emp3) [right of=emp2] {};
	\node[main node] (w0) [above of =emp2,label=below:$w_0$] {};
	\node[main node] (w01) [above of =w0,label=right:$w_{01}$] {};
	
	\node[main node] (w1) [right of =emp3,label=below:$w_1$] {};
	\node[main node] (w11) [right of =w1,label=right:$w_{11}$] {};
	\node[main node] (w2) [below of =emp2,label=above:$w_2$] {};
	\node[main node] (w21) [below of =w2,label=right:$w_{21}$] {};
	\node[main node] (a) [above left of = w01, label=above: $a$] {};
	\node[main node] (a') [above right of = w01, label=above: $a'$] {};
	\node[main node] (b) [above right of = w11, label=right: $b$] {};
	\node[main node] (b') [below right of = w11, label=right: $b'$] {};
	\node[main node] (c) [below right of = w21, label=below: $c$] {};
	\node[main node] (c') [below left of = w21, label=below: $c'$] {};
	
	\draw (v0) -- (v1);
	\draw (v1) -- (w0);
	\draw (v1) -- (w1);
	\draw (v1) -- (w2);
	\draw (w0) -- (w01);
	\draw (w1) -- (w11); 
	\draw (w2) -- (w21);
	\draw (w01) -- (a);
	\draw (w01) -- (a');
	\draw (w11) -- (b);
	\draw (w11) -- (b');
	\draw (w21) -- (c);
	\draw (w21) -- (c');
	
	\node [below=1.5cm, align=flush center,text width=8cm] at (w21)
        {
            $(2,2,2)c$
        };
\end{tikzpicture}

\end{center}
\caption{Hurwitz trees for Klein-four covers of type (4,4)}
\label{44trees}
\end{figure}

\begin{theorem}[\cite{mitchell}, Theorem 3.4.14]\label{44}
The only possible Hurwitz trees for a $(\Z/2)^2$-cover over $R$ of type $(4,4)$ are the ones in Figure \ref{44trees}. In particular, the only possible branch partitions are $(1,1,1,1,1,1), (3,3)$ and $(2,2,2)$.
\end{theorem}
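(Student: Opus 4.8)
The plan is to combine the branch cycle criterion of Theorem \ref{branchcyclecriterion}, which pins down the \emph{leaves} of the Hurwitz tree, with the deformation-datum theory of Hurwitz trees from \cite{BW}, which constrains its \emph{shape}, and to impose the latter simultaneously on all three $\Z/2$-subcovers. First I would record the leaf data. For a $(\Z/2)^2$-cover of type $(4,4)$, all three of its $\Z/2$-subcovers have conductor $4$: in the Artin--Schreier normal form of Proposition \ref{naseq}, if the two generating subcovers are $w_1^2-w_1=1/t^3$ and $w_2^2-w_2=f_2(1/t)$ with $\deg f_2=3$ and leading coefficient $c_{2,3}\notin\F_2$, then $w_1+w_2$ satisfies an equation with degree-$3$ leading coefficient $1+c_{2,3}\neq 0$, so the third subcover also has conductor $4$ (a smaller conductor would contradict the type). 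Hence, for any lift $X_R\to\P^1_R$, Theorem \ref{branchcyclecriterion} forces the three subcovers to lift with $4$-point branch loci $B_1,B_2,B_3$ meeting pairwise in exactly $\tfrac{4\cdot 1}{2}=2$ points, so $|B_1\cup B_2|=6$. Every branch point of $X_R$ has cyclic prime-to-$p$ inertia, here of order $2$, lying in exactly one of the three index-$2$ subgroups and missing the other two, so there are exactly $2$ branch points with each of the $3$ possible inertia groups, giving $6$ leaves, which I label $a,a',b,b',c,c'$ by inertia. Restricting the Hurwitz tree of $X_R$ to a subgroup $H=\langle g\rangle$ deletes the two leaves of inertia $\langle g\rangle$ and must leave a valid $\Z/2$-Hurwitz tree on the remaining $4$.

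Next I would bring in the constraints on individual trees. From Lemma \ref{ramjumps} the special different of a conductor-$4$ $\Z/2$-cover has degree $(p-1)(m+1)=4$, so by the different criterion \cite{GM} its lift has exactly $4$ branch points, all coalescing to one point on $\P^1_k$ (Theorem \ref{branchcoalesce}); Corollary \ref{even coalesce} and the deformation-datum relations on each component of the semistable model (which for $p=2$ read off the degrees of certain logarithmic differential forms) cut the possible $\Z/2$-subtree shapes down to a short list. For the full cover, Lemma \ref{higherconductorsp} pins both lower ramification jumps of the type-$(4,4)$ extension at $3$, so Lemma \ref{ramjumps} gives special different $12$, matching the generic different $12$ of Lemma \ref{genericdifferent}; this is the global bookkeeping a candidate tree must respect. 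The real content is then the local analysis at each interior vertex: following \cite{BW} and Pagot's treatment of $(\Z/2)^2$-covers \cite{pagot}, each vertex carries a deformation datum for each of the three subcovers, with pole and zero orders at the descending edge, the ascending edges, and the leaves prescribed by the local conductors and the tree metric, subject to a degree-$(-2)$ relation, and the three data at a vertex must be mutually compatible (the sum of two being the datum of the third). This compatibility, which does not come from any single $\Z/2$-subtree in isolation, is the binding extra condition.

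Then I would run the enumeration. Imposing at every interior vertex the degree relation and the three-fold compatibility, the conductor-$4$ normalization on the root edge $e_0$, and the requirement that deleting each pair of like-inertia leaves yield a valid $\Z/2$-subtree, the partitions of $6$ together with their leaf-labelings collapse to a finite list. Configurations with a ``bad'' part or a mixed labeling (say a branch carrying $\{a,a',b\}$) fail the parity and compatibility tests; the partitions $(6)$ and $(4,2)$ fail the degree and compatibility relations at $v_1$. What survives is exactly: six singleton branches $(1,1,1,1,1,1)$; the partition $(3,3)$ in which each branch contains one leaf of each inertia type; and the partition $(2,2,2)$ in which each branch is a pair of like-inertia leaves, where the remaining metric freedom produces precisely the three configurations $(2,2,2)a$, $(2,2,2)b$, $(2,2,2)c$ of Figure \ref{44trees}. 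Finally one checks each of these actually occurs, by exhibiting a cover with the given tree (explicit equations, or Matignon-type constructions \cite{mat}), completing the classification.

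The main obstacle is the casework of the previous paragraph. Theorem \ref{branchcyclecriterion}, Corollary \ref{even coalesce}, and the different computations are quick and already force the leaf data and much of the coarse shape, but they do not by themselves distinguish $(2,2,2)a$, $(2,2,2)b$, $(2,2,2)c$ or exclude $(4,2)$; for that one needs the fine deformation-theoretic data on each component --- orders of zeros and poles of the associated differential forms and their mutual compatibility among the three subcovers --- together with how the depth of each edge interacts with the conductor-$4$ condition at $v_1$. Carrying this out so that every surviving configuration is both shown consistent and realized is where the bulk of the work lies; this is the computation of \cite{mitchell}, and a self-contained treatment would be organized around the three $\Z/2$-subtrees sitting inside one common metric tree.
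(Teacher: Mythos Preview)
The paper does not prove this theorem; it is quoted from Mitchell's thesis \cite{mitchell}, and the paper supplies only a one-sentence sketch: ``The proof of the theorem involves looking at each possible tree with the correct total number of leaves, and checking if each path in the tree satisfies the depth relation [\cite{mitchell}, Definition 3.1.4, H4].'' So there is no in-paper proof to compare against beyond that hint.

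Your proposal is consistent with that sketch and fleshes it out considerably. You correctly extract the leaf data (six leaves, two per inertia type) from Theorem \ref{branchcyclecriterion}, and your observation that restricting to each $\Z/2$-quotient must yield a valid $4$-leaf $\Z/2$-Hurwitz tree is exactly the kind of compatibility the paper itself exploits downstream in Proposition \ref{444prop}. Where you and the paper's sketch differ in emphasis is that the sketch isolates a single axiom --- the depth relation H4 along each root-to-leaf path --- as the operative constraint, whereas you frame the elimination in terms of the deformation data (logarithmic differentials) at each vertex and their three-fold additive compatibility \`a la \cite{BW} and \cite{pagot}. These are two faces of the same Hurwitz-tree formalism: the depth relation is what ties the edge metric to the Artin conductor and hence to the pole orders of the differential data, so your vertex-by-vertex degree bookkeeping and Mitchell's path-by-path depth check should cut down to the same survivors. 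Your route has the advantage of making the inertia labelling of the surviving trees (one of each type per branch in $(3,3)$; like-inertia pairs in $(2,2,2)$) fall out naturally, which the paper uses without comment in the proof of Proposition \ref{444prop}. The honest caveat you flag --- that excluding $(4,2)$ and separating the three $(2,2,2)$ metrics requires the fine differential/depth analysis rather than just the parity and different computations --- is accurate, and is precisely the content deferred to \cite{mitchell}.
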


The proof of the theorem involves looking at each possible tree with the correct total number of leaves, and checking if each path in the tree satisfies the depth relation \cite[Definition 3.1.4, H4]{mitchell}.

\begin{lemma}\label{211}
A lift of a $\Z/2$-cover of conductor $4$ cannot have branch partition $(2,1,1)$.
\end{lemma}

\begin{proof}
After a change of variables, we may suppose that the extension $k((y))/k((t))$ is defined by $y^2-y=\frac{1}{t^3}+\frac{\alpha}{t}$, and that it has a lift defined by $Y^2-Y=\frac{1}{T^3}+\frac{A}{T}$, where $A\in R$ reduces to $\alpha$. After the further change of variable $Z=T^2Y$, $R[[Y]]/R[[T]]$ is given by $Z^2-T^2Z=T+AT^3$, and is thus ramified at the roots of the discriminant $T^4+4T+4AT^3=T(T^3+4AT^2+4)$. Let $0,a,b,c\in R$ be the branch points of $R[[Y]]/R[[T]]$, where $(T-a)(T-b)(T-c)=T^3+4AT^2+4$ (after enlarging $R$). Suppose that $R[[Y]]/R[[T]]$ has branch partition $(2,1,1)$. Without loss of generality, we may assume that $v(a)>v(b)\geq v(c)\geq 0$, where $v(2)=1$ is the normalized valuation on $R$. Here $a$ and $0$ are on the same branch of the Hurwitz tree, and $b$ and $c$ are on separate branches.

Then we have that $abc=-4, ab+ac+bc=0, a+b+c=-4A$. The first equality gives that $v(a)+v(b)+v(c)=2$, so $2/3<v(a)\leq 2$. Now we consider two cases:

Case 1: $v(a)\neq v(b+c)$. Then from the third equality above, $v(a+b+c)=\mathrm{min}(v(a),v(b+c))=v(-4A)\geq 2$. Since $v(a)\leq 2$, we must have that $v(b+c)>v(a)=2$. Since the three branch points $0,b,c$ are equidistant, we get
\begin{align*}
v(b-0)=v(c-0)=v(b-c).
\end{align*}
Then $v(b-c)<v(c)+1=v(2c)$, and so
\begin{align*}
v(a)<v(b+c)=v(b-c+2c)=\mathrm{min}(v(b-c),v(c)+1)=v(b-c)=v(b).
\end{align*}
This contradicts our assumption that $v(a)>v(b)$.

Case 2: $v(a)=v(b+c)$. From $ab+ac+bc=0$, we get $v(a(b+c))=v(bc)$. However, $v(a(b+c))=v(a)+v(b+c)=2v(a)$, and $v(bc)=v(b)+v(c)$, so $2v(a)=v(b)+v(c)$. This contradicts our assumption that $v(a)>v(b)\geq v(c)$.

Therefore, $R[[Y]]/R[[T]]$ cannot have branch partition $(2,1,1)$.
\end{proof}

Now we classify the Hurwitz trees for a $(\Z/2)^3$-cover of type $(4,4,4)$. Note that this is the smallest possible conductor triple of a $(\Z/2)^3$-cover, since $m_1\equiv -1$ mod $2^{3-1}$ by Theorem \ref{branchcyclecriterion}.

\begin{proposition}\label{444prop}
The only possible branch partition of a Hurwitz tree for a lift of a $(\Z/2)^3$-cover over $k$ of type $(4,4,4)$ is $(1,1,1,1,1,1,1)$, i.e. with equidistant geometry and $7$ branch points. See Figure \ref{444fig} below.
\end{proposition}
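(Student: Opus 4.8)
The plan is to exploit Mitchell's classification of Hurwitz trees for $(\Z/2)^2$-covers of type $(4,4)$ (Theorem~\ref{44}) by restricting the given cover to each of its $(\Z/2)^2$-subquotients. Since the cover has type $(4,4,4)$ we have $m_1=m_2=m_3=3$, so by Theorem~\ref{branchcyclecriterion} the three generating $\Z/2$-subcovers lift with branch loci $B_1,B_2,B_3$ satisfying $|B_i|=4$, $|B_i\cap B_j|=2$ and $|B_1\cap B_2\cap B_3|=1$; by inclusion--exclusion any lift $R[[Z]]/R[[T]]$ has $|B_1\cup B_2\cup B_3|=7$ geometric branch points, and (as in the examples following Theorem~\ref{branchcyclecriterion}) these correspond bijectively to the nonidentity elements of $G=(\Z/2)^3$ via their inertia generators. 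Fix such a lift and let $T$ be its Hurwitz tree, so $T$ has $7$ leaves; the goal is to show the branch partition of $T$ is $(1,1,1,1,1,1,1)$, which is exactly equidistant geometry with $7$ branch points.

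First I would compute the ramification filtration of $G$. By Lemma~\ref{higherconductorsp} with $m_1=m_2=m_3=3$, all lower ramification jumps of $k((z))/k((t))$ equal $3$; hence $I_j=G$ for $j\le 3$ and $I_j=1$ for $j\ge 4$, and since the Herbrand function has slope $1$ on $[0,3]$ the same holds in the upper numbering: $G^i=G$ for $i\le 3$ and $G^i=1$ for $i\ge 4$. Now for each nonidentity $g\in G$ put $D_g:=k[[z]]^{\langle g\rangle}/k[[t]]$, a totally ramified $(\Z/2)^2$-extension with group $\overline G:=G/\langle g\rangle$. Then $\overline G^i=G^i\langle g\rangle/\langle g\rangle$ equals $\overline G$ for $i\le 3$ and is trivial for $i\ge 4$, so every $\Z/2$-quotient of $\overline G$ has unique upper ramification jump $3$, i.e.\ conductor $4$; thus $D_g$ has type $(4,4)$. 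The intermediate cover $R[[Z]]^{\langle g\rangle}/R[[T]]$ is a lift of $D_g$, so by Theorem~\ref{44} the branch partition of its Hurwitz tree lies in $S:=\{(1,1,1,1,1,1),(3,3),(2,2,2)\}$. Note that each member of $S$ is a partition of $6$ into equal parts and that $(6)\notin S$.

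The bridge between $T$ and the trees of the $D_g$ is that the branch locus of $R[[Z]]^{\langle g\rangle}/R[[T]]$ is precisely the set of branch points of the full cover whose inertia generator is $\ne g$ (six of the seven), and the branch partition records only the $p$-adic clustering of the branch points (the remark following the construction of the Hurwitz tree), which is intrinsic to the branch locus in $\P^1$ and hence unaffected by deleting the point $x_g$ with inertia generator $g$. Therefore the branch partition of the Hurwitz tree of $D_g$ is obtained from the branch partition $P$ of $T$ by deleting $x_g$: the size of the branch containing $x_g$ drops by $1$, and that branch disappears if its size was $1$. So $P$ is a partition of $7$ such that every single such leaf-deletion lies in $S$. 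A short case analysis finishes the proof. If $P$ has no branch of size $\ge 2$, then $P=(1,1,1,1,1,1,1)$ and we are done; otherwise write the branch sizes as $e_1\ge e_2\ge\cdots\ge e_k$ with $e_1\ge 2$. If $k=1$, deleting a leaf gives $(6)\notin S$, a contradiction. If $k\ge 2$, deleting a leaf from the first branch gives $(e_1-1,e_2,\ldots,e_k)$, which must be uniform, forcing $e_1-1=e_2=\cdots=e_k$ and hence $ke_2=6$ with $e_2\ge 1$; but then deleting a leaf from a branch of size $e_2$ gives, when $e_2\ge 2$, a partition with the distinct parts $e_2+1$ and $e_2-1\ge 1$, hence non-uniform, and, when $e_2=1$ (so $P=(2,1,1,1,1,1)$), the non-uniform partition $(2,1,1,1,1)$. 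Each case contradicts membership in $S$, so $P=(1,1,1,1,1,1,1)$.

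The step I expect to be the main obstacle is the ``bridge'' claim of the third paragraph: one must justify carefully that passing from the full cover to the subcover $D_g$ acts on Hurwitz trees by simply deleting the leaf $x_g$ — in particular, that the $p$-adic clustering of the remaining six branch points, and hence the branch partition, is genuinely inherited. This is where the precise axioms for a Hurwitz tree (as a rooted metric tree with associated characters and differential data, following \cite{BW} and \cite{mitchell}) should be invoked, and where I would be most careful. By contrast, the ramification-jump bookkeeping in the second paragraph is routine given Lemma~\ref{higherconductorsp}, and the concluding combinatorial lemma is elementary.
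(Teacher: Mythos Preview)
Your approach is correct and is genuinely different from the paper's. The paper fixes three generating $\Z/2$-subcovers $C_1,C_2,C_3$, looks at the single Klein-four subcover $C_1\times C_2$, and then runs a three-case analysis on its Hurwitz tree (equidistant, $(3,3)$, $(2,2,2)$), using the branch cycle criterion together with Lemma~\ref{even branches} (even branch sizes for $\Z/2$-covers) to reach a contradiction in each non-equidistant case. You instead use \emph{all seven} Klein-four quotients $D_g$ simultaneously: after showing each has type $(4,4)$ via the ramification filtration, you observe that deleting any one of the seven leaves from $T$ must leave a branch partition in $S$, and a short uniform-partition argument forces $P=(1^7)$. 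Your route avoids Lemma~\ref{even branches} entirely and replaces the case analysis by clean combinatorics. In fact, the paper's remark immediately following the proof describes exactly your method and confirms it works; the paper chose its argument because it generalises once one has a Hurwitz-tree classification for Klein-four covers of higher conductor type, whereas your ``delete one leaf'' bijection is special to the $(4,4,4)$ case where the seven leaves match the seven nonidentity elements.

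Your flagged ``bridge'' step is indeed the only place requiring care, but it is fine: the underlying rooted metric tree of a Hurwitz tree is the dual graph of the stable marked model of the open disc with the branch points as markings, hence is determined by the $p$-adic configuration of the branch locus alone (the cover contributes only the characters and differential data, cf.\ \cite{BW}, \cite{mitchell}). Since the branch locus of $R[[Z]]^{\langle g\rangle}/R[[T]]$ is exactly the six points with inertia generator $\ne g$, its branch partition is obtained from $P$ by the leaf-deletion you describe, once one notes that contracting any resulting two-valent vertices does not change which leaves lie in which branch off $v_1$.
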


\begin{proof}
We study possible Hurwitz trees $T$ for a lift $\hat{C}$ of $C$, a $(\Z/2)^3$-cover over $R[[Z]]/R[[T]]$, by looking at subtrees corresponding to lifts of its $(\Z/2)^2$-subcovers. Let $C_1:R[[Z]]^{G_1}/R[[T]]$, $C_2:R[[Z]]^{G_2}/R[[T]]$, $C_3:R[[Z]]^{G_3}/R[[T]]$ be three generating $\Z/2$-subcovers of $\hat{C}$, in the notations of Definition \ref{type}. Below, I will use the same letter to indicate that several branch points belong to the same branch in the Hurwitz tree. For example, $a_i$ and $a_j$ are closer to each other than $a_i$ is to $b_k$.

Recall by Theorem \ref{44}, a subtree of $T$, corresponding to the Hurwitz tree of a $(\Z/2)^2$-cover of type $(4,4)$, can only have branch partition $(1,1,1,1,1,1)$, $(3,3)$ or $(2,2,2)$.

Case 1: Suppose $C_1\times C_2$ has Hurwitz tree with branch partition $(2,2,2)$, with branch points $a_1,a_2,b_1,b_2,c_1,c_2$. Without loss of generality, assume that $C_1,C_2$ have branch loci $\{a_1,a_2,b_1,b_2\}$ and $\{b_1,b_2,c_1,c_2\}$ respectively. Then by the branch cycle criterion (Theorem \ref{branchcyclecriterion}), without loss of generality, we can assume that $C_3$ has branch points $a_1,b_1,c_1$ and a new branch point $d$.

The Hurwitz tree of $C_3$ has at least two branches with only one branch point, and by Lemma \ref{211} it cannot have branch partition $(2,1,1)$, so it has to have equidistant geometry. Therefore $d$ is not on the same branch of $T$ as any branch point of $C_1\times C_2$, i.e. $\hat{C}$ has Hurwitz tree with branch partition $(2,2,2,1)$, see below. Then the subtree corresponding to $C_1\times C_3$ has branch locus $\{a_1,a_2,b_1,b_2,c_1,d\}$, thus has branch partition $(2,2,1,1)$, not an allowed Hurwitz tree for Klein-four covers by Theorem \ref{44}.

\begin{center}
    
\begin{tikzpicture}[node distance = 1cm, scale=0.8, every node/.style={scale=0.75}]
	\tikzstyle{main node}=[draw,circle,fill=black,minimum size=3pt,
	inner sep=0pt]
	\tikzstyle{empty node} = [draw, color=white, fill=white,opacity = 0]
	
	\node[main node] (v0) [label=below:$v_0$] {};
	\node[empty node] (emp) [right of=v0] {};
	\node[main node] (v1) [right of =emp,label=below:$v_1$] {};
	\node[empty node] (emp1) [right of=v1] {};
	\node[empty node] (emp2) [right of=emp1] {};
	\node[empty node] (emp3) [right of=emp2] {};
	\node[main node] (w0) [above of =emp2,label=below:$w_0$] {};
	\node[main node] (w01) [above left of =w0,label=left:$w_{01}$] {};
	\node[main node] (w02) [above right of =w0,label=right:$w_{02}$] {};
	\node[main node] (w1) [right of =emp3,label=below:$w_1$] {};
	\node[main node] (w11) [above right of =w1,label=above:$w_{11}$] {};
	\node[main node] (w12) [below right of =w1,label=below:$w_{12}$] {};
	\node[main node] (w2) [below of =emp2,label=above:$w_2$] {};
	\node[main node] (w21) [below right of =w2,label=right:$w_{21}$] {};
	\node[main node] (w22) [below left of =w2, label=left:$w_{22}$] {};
	\node[main node] (a) [above of = w01, label=above: $a_1$] {};
	\node[main node] (a') [above of = w02, label=above: $a_2$] {};
	\node[main node] (b) [right of = w11, label=right: $b_1$] {};
	\node[main node] (b') [right of = w12, label=right: $b_2$] {};
	\node[main node] (c) [below of = w21, label=below: $c_1$] {};
	\node[main node] (c') [below of = w22, label=below: $c_2$] {};
        \node[empty node] (emp4) [left of =w22] {};
        \node[main node] (w3) [left of =emp4,label=left: $w_3$]{};
        \node[main node] (d) [below of =w3, label=below: $d$]{};
	
	\draw (v0) -- (v1);
	\draw (v1) -- (w0);
	\draw (v1) -- (w1);
	\draw (v1) -- (w2);
	\draw (w0) -- (w01);
	\draw (w0) -- (w02); 
	\draw (w1) -- (w11);
	\draw (w1) -- (w12);
	\draw (w2) -- (w21);
	\draw (w2) -- (w22);
	\draw (w01) -- (a);
	\draw (w02) -- (a');
	\draw (w11) -- (b);
	\draw (w12) -- (b');
	\draw (w21) -- (c);
	\draw (w22) -- (c');
        \draw (v1) -- (w3);
        \draw (w3) -- (d);
	
\end{tikzpicture}
\end{center}

Case 2: Suppose $C_1\times C_2$ has Hurwitz tree with branch partition $(3,3)$, with branch points $a_1,a_2,a_3$ on the first branch, and $b_1,b_2,b_3$ on the second branch, where branch points on each branch are equidistant by Theorem \ref{44}. Without loss of generality, assume $C_1,C_2$ have branch loci $\{a_1,a_2,b_1,b_2\}$ and $\{a_1,a_3,b_1,b_3\}$ respectively. Then we can assume that $C_3$ has branch points $a_1,b_2,b_3$ and a new branch point $d$ by the branch cycle criterion.

Applying Lemma \ref{211} to $C_3$, $C_3$ has to have branch partition $(2,2)$, so $d$ must be on the same branch as $a_1,a_2,a_3$, i.e. $\hat{C}$ has Hurwitz tree $(4,3)$, see below. Then the third $\Z/2$-subcover $C_{12}$ of $C_1\times C_2$ has branch points $a_2,a_3,b_2,b_3$, and $C_{12}\times C_3$ has branch locus $\{a_1,a_2,a_3,d,b_2,b_3\}$. Thus the subtree corresponding to $C_{12}\times C_3$ is of shape $(4,2)$, not an allowed Hurwitz tree for Klein-four covers by Theorem \ref{44}.

\begin{center}
\begin{tikzpicture}[node distance = 1cm, scale=0.8, every node/.style={scale=0.75}]
	\tikzstyle{main node}=[draw,circle,fill=black,minimum size=3pt,
	inner sep=0pt]
	\tikzstyle{empty node} = [draw, color=white, fill=white,opacity = 0]
	
	\node[main node] (v0) [label=below:$v_0$] {};
	\node[empty node] (emp) [right of=v0] {};
	\node[main node] (v1) [right of =emp,label=below:$v_1$] {};
	\node[empty node] (emp1) [right of=v1] {};
	\node[empty node] (emp2) [right of=emp1] {};
	\node[empty node] (emp3) [right of=emp2] {};
	\node[main node] (w1) [above of =emp2,label=below:$w_1$] {};
	\node[main node] (w12) [above of = w1, label= right: $w_{12}$] {};
	\node[main node] (w11) [left of =w12,label=left:$w_{11}$] {};
	\node[main node] (w13) [right of =w12,label=right:$w_{13}$] {};
	\node[main node] (w2) [below of =emp2,label=above:$w_2$] {};
	\node[main node] (w22) [below of = w2, label= right: $w_{22}$] {};
	\node[main node] (w21) [right of =w22,label=right:$w_{21}$] {};
	\node[main node] (w23) [left of =w22, label=left:$w_{22}$] {};
	\node[main node] (a) [above of = w11, label=above: $a_1$] {};
	\node[main node] (b) [above of =w12, label=above:$a_2$] {};
	\node[main node] (c) [above of = w13, label=above: $a_3$] {};
	\node[main node] (a') [below of = w21, label=below: $b_1$] {};
	\node[main node] (b') [below of =w22, label=below:$b_2$] {};
	\node[main node] (c') [below of = w23, label=below: $b_3$] {};
 \node[main node] (w14) [right of =w13, label=right: $w_{14}$]{};
 \node[main node] (d) [above of =w14, label=above: $d$]{};
	
	\draw (v0) -- (v1);
	\draw (v1) -- (w1);
	\draw (v1) -- (w2);
	\draw (w1) -- (w11);
	\draw (w1) -- (w12);
	\draw (w1) -- (w13) ;
        \draw (w1) -- (w14);
	
	\draw (w2) -- (w21);
	\draw (w2) -- (w22);
	\draw (w2) -- (w23);
	\draw (w11) -- (a);
	\draw (w12) -- (b);
	\draw (w13) -- (c);
	\draw (w21) -- (a');
	\draw (w22) -- (b'); 
	\draw (w23) -- (c'); 
	\draw (w14) -- (d);
	
\end{tikzpicture}
\end{center}

Case 3: Suppose $C_1\times C_2$ has Hurwitz tree with branch partition $(1,1,1,1,1,1)$. Without loss of generality, assume $C_1$ has branch locus $(a,a',b,b')$ and $C_2$ has branch locus $(b,b',c,c')$. Then by Theorem \ref{branchcyclecriterion}, we can assume $C_3$ has branch points $(a,b,c,d)$. If $d$ is on the same branch as one of the other points, say $a$, then the Hurwitz tree of $C_3$ would have branch partition $(2,1,1)$, which is a contradiction by Lemma \ref{211}. Therefore $d$ is on its own branch, and $T$ must have branch partition $(1,1,1,1,1,1,1)$, i.e. equidistant branch locus geometry. See Figure \ref{444fig} below.

Therefore, a $(\Z/2)^3$-cover over $k$ of type $(4,4,4)$ can only be lifted with equidistant geometry.
\end{proof}

\begin{figure}[h]
    \centering
    \begin{tikzpicture}[node distance = 1cm, scale=0.8, every node/.style={scale=0.9}]
	\tikzstyle{main node}=[draw,circle,fill=black,minimum size=3pt,
	inner sep=0pt]
	\tikzstyle{empty node} = [draw, color=white, fill=white,opacity = 0]
	
	\node[main node] (v0) [label=below:$v_0$] {};
	\node[empty node] (emp) [right of =v0] {};
	\node[main node] (v1) [right of =emp,label=below left :$v_1$] {};
	\node[empty node] (emp1) [right of = v1] {};
	\node[main node] (w4) [right of = emp1, label=above: $w_4$] {};
	\node[main node] (w3) [above of=w4, label=above: $w_3$] {};
	\node[main node] (w2) [above of=w3,label=above: $w_2$] {};
	\node[main node] (w1) [above of=w2,label=above: $w_1$] {};
	\node[main node] (w5) [below of=w4,label=below: $w_5$] {};
	\node[main node] (w6) [below of=w5,label=below: $w_6$] {};
	\node[main node] (w7) [below of=w6, label=below: $w_7$] {};
	\node[main node] (a) [right of= w1, label=right: $a$] {};
	\node[main node] (a') [right of= w2, label=right: $a'$] {};
	\node[main node] (b) [right of= w3, label=right: $b$] {};
	\node[main node] (b') [right of= w4, label=right: $b'$] {};
	\node[main node] (c) [right of= w5, label=right: $c$] {};
	\node[main node] (c') [right of= w6, label=right: $c'$] {};
 \node[main node] (d) [right of= w7, label=right: $d$] {};
	
	\draw (v0) -- (v1); 
	\draw (v1) -- (w1);
	\draw (v1) -- (w2);
	\draw (v1) -- (w3);
	\draw (v1) -- (w4);
	\draw (v1) -- (w5);
	\draw (v1) -- (w6);
        \draw (v1) -- (w7);
	\draw (w1) -- (a);
	\draw (w2) -- (a');
	\draw (w3) -- (b);
	\draw (w4) -- (b');
	\draw (w5) -- (c);
	\draw (w6) -- (c');
        \draw (w7) -- (d);
	
\end{tikzpicture}
    \caption{Equidistant Hurwitz tree for $(\Z/2)^3$-cover of type $(4,4,4)$}
    \label{444fig}
\end{figure}
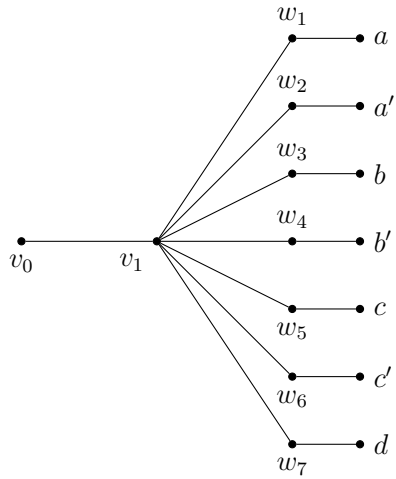

\begin{remark}
In this special case of $(\Z/2)^2$-cover of type $(4,4,4)$, the lift only has $7$ branch points. Since $(\Z/2)^3$ has $7$ Klein-four quotients, there are $7$ Klein-four subcovers, all with distinct branch loci. Therefore we can look at a candidate Hurwitz tree for the $(\Z/2)^3$-cover, take away one branch point at a time, and check if the remaining subtree is one of the allowed Klein-four Hurwitz trees. This method will allow us to reach the same conclusion. However, the above proof can be generalized to more general $(\Z/2)^3$-covers, if we know a classification of Klein-four Hurwitz trees with higher conductors.
\end{remark}

\subsection{Lifting $(\Z/2)^3$-Covers of Type $(4,4,2r),r\geq 3$}\label{442r}
In this section, I will construct lifts of any $(\Z/2)^3$-cover of type $(4,4,2r)$ for $r\geq 3$, using methods in Mitchell's thesis \cite{mitchell} and results in Pagot's thesis \cite{pagot}. The lifts have Hurwitz tree $(3,3,3,2,\ldots,2)$, with $r-3$ branches of size $2$. Define $\rho=2^\frac{1}{2r-1}$, and assume $\rho\in \pi R$ after possibly enlarging $R$.

\begin{lemma}\label{222lifting}
Let $\alpha\in k^*,\beta\in k, A\in R^*$, and suppose that $U\in R^*$ is any element such that $-AU^2\equiv \alpha$ mod $\pi$ and $U-A\in R^*$. Then after possibly enlarging $R$, there exists $V\in R^*$ such that the following property holds: Let $$T_1=0,\ T_2=\rho^{4r-4} A,\ T_3=\rho U,\ T_4=\rho U+\rho^{4r-4}V.$$
Then the cover $\displaystyle Y^2=F(T^{-1})=\prod_{i=1}^4(1-T_iT^{-1})$ of $\P^1_R$
has good reduction, namely with reduction $\displaystyle z^2-z=\frac{\alpha}{t^3}+\frac{\beta}{t}$.
\end{lemma}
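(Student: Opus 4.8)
The strategy is to pin down the branch--locus geometry and then identify the reduction by an explicit sequence of coordinate changes, in the spirit of the $\Z/2\times\Z/2$ computations of Green--Matignon \cite{GM}, Pagot \cite{pagot} and Mitchell \cite{mitchell}.

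First I would analyze the geometry of $B=\{T_1,T_2,T_3,T_4\}$. Writing $v_0=v(\rho)>0$ and using $\rho^{2r-1}=2$, one checks that $v(T_1-T_2)=v(T_3-T_4)=(4r-4)v_0$ while every other difference $T_i-T_j$ has valuation $v_0$, since $v(\rho^{4r-4})=(4r-4)v_0>v_0$ and $U\in R^{*}$. Thus $\{T_1,T_2\}$ and $\{T_3,T_4\}$ are two clusters of size $2$ at mutual distance $v_0$, consistent with Corollary \ref{even coalesce} and Lemma \ref{even branches}, and with the shape $(3,3,3,2,\dots,2)$ claimed for the full $(\Z/2)^3$-cover. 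I would also record $v(T_2T_3T_4)=(4r-4)v_0+2v_0=(4r-2)v_0=v(4)$ and $T_2T_3T_4=\rho^{4r-2}AU(U+\rho^{4r-5}V)=4AU(U+\rho^{4r-5}V)$, so that $-T_2T_3T_4/4\equiv-AU^{2}\equiv\alpha\pmod{\pi}$ by hypothesis.

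Next, clearing denominators turns $Y^{2}=F(T^{-1})$ into the double cover $W^{2}=T(T-T_2)(T-T_3)(T-T_4)$ with $W=T^{2}Y$, branched at $B$ and unramified at $\infty$. After rescaling $T$ to zoom into the smallest disc containing $B$, the reduction $\bar F$ becomes a perfect square on the corresponding component, so a Kummer substitution $Y=u(T)\bigl(1+(\text{small})\cdot Z\bigr)$, with $u(T)$ a square root of $\bar F$, brings the equation to the Artin--Schreier shape $Z^{2}-Z=h(T^{-1})$, where $\bar h$ is a polynomial of degree $3$ in $1/t$. Tracking the coefficients, the previous paragraph gives that the $1/t^{3}$-coefficient of $\bar h$ is $\alpha$; the $1/t^{2}$- and $1/t$-coefficients are explicit polynomials $\bar\gamma,\bar\delta$ in $\bar A,\bar U,\bar V$. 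Since $k$ is algebraically closed, write $\bar\gamma=c^{2}$; the change of variable $z\mapsto z+c/t$ (legitimate in characteristic $2$) removes the $1/t^{2}$ term and replaces the $1/t$-coefficient by $\bar\delta-c$. It then remains to choose $V\in R^{*}$ so that $\overline{\delta-\sqrt\gamma}=\beta$ --- a single polynomial equation in the single unknown $\bar V$ --- and to check that its leading term is a unit, so that a solution $V\in R^{*}$ exists after possibly enlarging $R$; this is where the hypotheses $A,U\in R^{*}$ and $U-A\in R^{*}$ enter. With this $V$, the reduction of the cover is exactly $z^{2}-z=\frac{\alpha}{t^{3}}+\frac{\beta}{t}$, and good reduction follows by construction, or, as a check, from the different criterion \cite[Section 3.4]{GM}: the generic different of $Y^{2}=F(T^{-1})$ is $(p-1)\cdot 4=4$ (four branch points, $n=1$) and the special different of $z^{2}-z=\alpha/t^{3}+\beta/t$ is $(m+1)(p-1)=4$, so the two agree. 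Alternatively one can exhibit the stable model directly from the tree of branch partition $(2,2)$, the two tails resolving the clusters $\{T_1,T_2\}$ and $\{T_3,T_4\}$ via the conductor-$2$ building blocks of \cite{pagot}.

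The hard part is the middle paragraph: choosing the correct rescaling of $T$ and the correct Kummer substitution so that the cover genuinely degenerates to a single Artin--Schreier equation with a pole of order exactly $3$, and then bookkeeping the lower-order coefficients precisely enough to write down $\bar\gamma,\bar\delta$ and verify that the equation for $V$ has a solution in $R^{*}$. The branch-locus computation of the first paragraph and the different-criterion check are routine.
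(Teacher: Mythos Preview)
Your strategy---reduce the Kummer equation $Y^2=F(T^{-1})$ to Artin--Schreier form by a substitution $Y=Q(T^{-1})-2Z$ and then match the reduced coefficients to $\alpha,\beta$---is exactly the paper's. The difference is in the order of operations. The paper writes $V$ down explicitly at the outset, namely $V=-\rho^2B-A+\bigl(-\rho^3(\rho^2B+A)U\bigr)^{1/2}$ for a fixed lift $B\in R$ of $\beta$, then sets $q=(\rho^{4r-3}UV+\rho^2U^2)^{1/2}$ and $Q(T^{-1})=1+qT^{-1}$, and verifies by direct expansion that
\[
F(T^{-1})=Q(T^{-1})^2+4BT^{-1}-4AU^2T^{-3}+o(4),
\]
so that $Y=Q-2Z$ yields $Z^2-ZQ=BT^{-1}-AU^2T^{-3}+o(1)$, which reduces (using $Q\equiv 1\bmod\pi$ and $-AU^2\equiv\alpha$) to $z^2-z=\alpha/t^3+\beta/t$ with no $1/t^2$ term ever appearing. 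Your plan to leave $V$ free, compute $\bar\gamma,\bar\delta$, normalise away the square term, and then solve for $\bar V$ is equivalent in principle but forces you to carry the $T^{-2}$-coefficient through the whole computation.

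One caution about the step you flag as hard: taking $u(T)$ to be merely ``a square root of $\bar F$'' is not sufficient. Since every $T_i$ lies in $\pi R$, the naive reduction is $\bar F=1$, and with $u=1$ the expression $(F-1)/4$ is \emph{not} integral---its $T^{-1}$-coefficient $-(T_2+T_3+T_4)$ has valuation $v(2\rho)=2r\cdot v(\rho)$, strictly less than $v(4)=(4r-2)\cdot v(\rho)$. What you actually need is a $Q$ that approximates $\sqrt{F}$ modulo $4$, not merely modulo $\pi$; the paper's choice of $q$ as the square root of the $T^{-2}$-coefficient of $F$ does precisely this, and the quadratic that $V$ must then satisfy (so that the $T^{-1}$-coefficient also matches) is exactly what produces the explicit formula above. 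Relatedly, the rescaling of $T$ you propose does not appear in the paper and is not needed: the argument is carried out entirely in the original $T$-coordinate, since the target reduction lives over $k[[t]]$, not over a blown-up component of the stable model.
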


\begin{proof}
This proof is similar to the proof of Lemma 4.2.2 in \cite{mitchell}, but with different and more general distances between branch points.

Let $V=-\rho^2B-A+(-\rho^3(\rho^2B+A)U)^{1/2}$, for some $B\in R$ with $B \equiv \beta$ mod $\pi$. Assume $V\in R$ after possibly enlarging $R$. Then $V$ is a solution to the polynomial equation 
\begin{align*}
V^2+2(\rho^2B+A)V+\rho^3(\rho^2B+A)U+(\rho^2B+A)^2&=0;\\
\text{or equivalently, } \rho^{4r-5}UV+ U^2-(\rho^{2r-2}B+\rho^{2r-4}A+U+\rho^{2r-4}V)^2&=0.
\end{align*}

Thus
\begin{equation}\label{defV}
(\rho^{4r-5}UV+ U^2)^{1/2}=-\rho^{2r-2}B-\rho^{2r-4}A-U-\rho^{2r-4}V,
\end{equation}
where $(\rho^{4r-5}UV+ U^2)^{1/2}$ denotes the appropriate square root of $\rho^{4r-5}UV+ U^2$. After possibly enlarging $R$, we can assume this element is in $R$, along with $V$.

For $r\in R$, let $o(r)$ denote a polynomial in $R[T^{-1}]$ with Gauss valuation strictly greater than $v(r)$, i.e. all the coefficients have valuations strictly greater than $v(r)$. Using the definitions of $T_i$ and $\rho$, we have that
\begin{align*}
F(T^{-1}) &= (1-\rho^{4r-4}AT^{-1})(1-\rho UT^{-1})(1-(\rho U+\rho^{4r-4}V)T^{-1})\\
&= 1-(\rho^{4r-4}A+2\rho U+\rho^{4r-4}V)T^{-1}+(\rho^{4r-3}UV+\rho^2 U^2)T^{-2}-4AU^2T^{-3}+o(4).
\end{align*}

Again after enlarging $R$, let $$q = (\rho^{4r-3}UV+\rho^2 U^2)^{1/2}=\rho(\rho^{4r-5}UV+ U^2)^{1/2}\pi R,$$and define $Q(T^{-1})=1+qT^{-1}\in R[T^{-1}]$.

Then by equation (\ref{defV}) and the definitions of $\rho$ and $q$,
\begin{align*}
&Q(T^{-1})^2+4BT^{-1}-4AU^2T^{-3}\\
=&Q(T^{-1})^2-2\rho((\rho^{4r-5}UV+ U^2)^{1/2}+\rho^{2r-4}A+U+\rho^{2r-4}V)T^{-1}-4AU^2T^{-3}\\
=&1+2qT^{-1}+q^2T^{-2}-2qT^{-1}-\rho^{2r}(\rho^{2r-4}A+U+\rho^{2r-4}V)T^{-1}-4AU^2T^{-3}\\
=&1-(\rho^{4r-4}A+2\rho U+\rho^{4r-4}V)T^{-1}+(\rho^{4r-3}UV+\rho^2 U^2)T^{-2}-4AU^2T^{-3}\\
=&F(T^{-1})+o(4).
\end{align*}

After the change of variables $Y=-2Z+Q(T^{-1})$, and using the above equality, the equation for the cover $Y^2=F(T^{-1})$ gives
\begin{align*}
4Z^2-4ZQ(T^{-1})+Q(T^{-1})^2
&=Q(T^{-1})^2+4BT^{-1}-4AU^2T^{-3}+o(4).\\
\text{Equivalently,}\quad Z^2-ZQ(T^{-1})&=BT^{-1}-AU^2T^{-3}+o(1).
\end{align*}
Finally, since $Q(T^{-1})\equiv 1$ mod $\pi$, by definitions of $A,B$ and $U$, this reduces to $z^2-z=\frac{\alpha}{t^3}+\frac{\beta}{t}$.
\end{proof}

\begin{proposition}\label{442rprop}
For all $(\Z/2)^3$-covers defined by a ring extension $k[[z]]/k[[t]]$ of type $(4,4,2r)$, $r\geq 3$, there exists a lift to characteristic $0$ with branch locus geometry $(3,3,3,\underbrace{2,\ldots,2}_{r-3})$. I.e. its Hurwitz tree has $3$ branches of size $3$ and $r-3$ branches of size $2$ (see Figure \ref{3332r}). In particular, the branch points of a lift here can never be equidistant.
\end{proposition}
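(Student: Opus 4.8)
The strategy is to use the sufficiency (``if'') direction of Theorem~\ref{branchcyclecriterion} to replace the problem by one about the three generating $\Z/2$-subcovers, to produce compatible lifts of those subcovers by hand out of Lemma~\ref{222lifting} together with its conductor-$2r$ analogue, and finally to read off the Hurwitz tree. For a cover of type $(4,4,2r)$ one has $m_1=m_2=3$ and $m_3=2r-1$, so $m_1\equiv-1\pmod 4$ and $m_2\equiv-1\pmod 2$: the congruence conditions of Theorem~\ref{branchcyclecriterion} hold automatically. Hence it suffices to lift the $\Z/2$-subcovers $C_i:=k[[z]]^{G_i}/k[[t]]$ over a common base $R[[T]]$ with branch loci $B_1,B_2,B_3$ satisfying $|B_1|=|B_2|=4$, $|B_3|=2r$, $|B_i\cap B_j|=2$ for $i\neq j$, and $|B_1\cap B_2\cap B_3|=1$; inclusion--exclusion then gives $|B_1\cup B_2\cup B_3|=2r+3$ branch points for the resulting $(\Z/2)^3$-lift.

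Next I would fix a combinatorial model of the branch locus realizing these constraints. Normalizing by Proposition~\ref{naseq}, write $C_1\colon w_1^2-w_1=\alpha_1/t^{3}$ and $C_2\colon w_2^2-w_2=\alpha_2/t^{3}+\beta_2/t$ with $\alpha_1,\alpha_2\in k^*$, $\alpha_1\neq\alpha_2$, and $C_3\colon w_3^2-w_3=\sum_{1\le j\le 2r-1,\ j\ \mathrm{odd}}c_{3,j}/t^{j}$ with $c_{3,2r-1}\notin\F_2$. The Klein four subcover $C_1\times C_2$ is again of type $(4,4)$ (its third quotient has branch locus $B_1\triangle B_2$ of size $4$), so by Theorem~\ref{44} its Hurwitz tree is one of $(1,1,1,1,1,1)$, $(3,3)$, $(2,2,2)$; I will arrange it to be $(2,2,2)$, with branches $\beta_1,\beta_2,\beta_3$ of size $2$, $B_1\cap B_2=\{P,P'\}\subset\beta_1$, the other two points of $B_1$ in $\beta_2$ and the other two of $B_2$ in $\beta_3$. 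Then $C_3$ is grafted on so that $B_3\cap(B_1\cup B_2)$ consists of $P$ together with one point in $\beta_2$ and one point in $\beta_3$; the remaining $2r-3$ points of $B_3$ supply one new leaf in each of $\beta_1,\beta_2,\beta_3$ (making them size $3$) and $r-3$ further branches $\gamma_1,\dots,\gamma_{r-3}$ of size $2$. One checks on this model that every $\Z/2$-subquotient of $(\Z/2)^3$ restricts to an even set of leaves on each branch, consistent with Corollary~\ref{even coalesce} and Lemma~\ref{even branches}, that $C_3$ itself acquires the geometry $(\underbrace{2,\dots,2}_{r})$, and that the branch partition of the whole tree is $(3,3,3,\underbrace{2,\dots,2}_{r-3})$.

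The explicit lifts are then assembled over the coordinate $T$ and with $\rho=2^{1/(2r-1)}$ as in Lemma~\ref{222lifting}. Apply Lemma~\ref{222lifting} to $C_1$ with $(\alpha,\beta)=(\alpha_1,0)$ and a choice $A=A_1\in R^*$, obtaining $B_1=\{0,\ \rho^{4r-4}A_1,\ \rho U_1,\ \rho U_1+\rho^{4r-4}V_1\}$, with $\{0,\rho^{4r-4}A_1\}$ the pair destined for $\beta_1$. Apply it again to $C_2$ with $(\alpha,\beta)=(\alpha_2,\beta_2)$ and the \emph{same} $A=A_1$, obtaining $B_2=\{0,\ \rho^{4r-4}A_1,\ \rho U_2,\ \rho U_2+\rho^{4r-4}V_2\}$; since $\alpha_1\neq\alpha_2$ forces $\bar U_1\neq\bar U_2$ in $k$, the two ``far'' pairs are $\rho$-adically separated and $B_1\cap B_2=\{0,\rho^{4r-4}A_1\}$, exactly as required. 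For $C_3$ I use the conductor-$2r$ analogue of Lemma~\ref{222lifting}: a Kummer cover $Y^2=\prod_{i=1}^{2r}(1-T_iT^{-1})$ whose $2r$ branch points fall into $r$ tight $\rho$-adic pairs, pinned so that one pair is $\{0,*\}$, one is $\{\rho U_1,*\}$, one is $\{\rho U_2,*\}$ and the remaining $r-3$ are new, with the partner coordinates and cluster radii chosen by a successive-approximation argument (matching all the $o(\,\cdot\,)$-terms, exactly as $V$ was solved for in Lemma~\ref{222lifting}) so that the reduction is $w_3^2-w_3=\sum_{j\ \mathrm{odd}}c_{3,j}/t^{j}$; this is where the needed parametrization of such lifts is imported from Pagot's thesis \cite{pagot}. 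With $\widehat C_1,\widehat C_2,\widehat C_3$ in hand over a common $R[[T]]$ and the intersection numbers verified, Theorem~\ref{branchcyclecriterion} yields a lift of the $(\Z/2)^3$-cover, with Hurwitz tree $(3,3,3,\underbrace{2,\dots,2}_{r-3})$; since that tree has at least three branches, two branch points lying in different branches are strictly $p$-adically farther apart than two lying in a common branch, so this lift can never be equidistant --- in contrast with the type $(4,4,4)$ case of Proposition~\ref{444prop} and with Matignon's equidistant lifts.

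The main obstacle is the construction of $\widehat C_3$: producing a conductor-$2r$ lift with a prescribed $r$-pair geometry, three of whose pairs are pinned at the points $0,\rho U_1,\rho U_2$ already fixed by $\widehat C_1$ and $\widehat C_2$, while matching \emph{all} the coefficients $c_{3,j}$ of the given Artin--Schreier equation upon reduction, not just the leading one. This requires propagating the valuation bookkeeping of Lemma~\ref{222lifting} through $r$ successive deformation steps and keeping the pinned branch points fixed throughout, and it is precisely the point where the analysis of the relevant differential data from Pagot's thesis is invoked; the verification of the intersection numbers and the Hurwitz tree, and the application of Theorem~\ref{branchcyclecriterion} and Lemma~\ref{even branches}, are then routine.
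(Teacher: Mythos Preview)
Your overall strategy and your construction of $\widehat C_1,\widehat C_2$ via Lemma~\ref{222lifting} with a common parameter $A$ are exactly what the paper does, and your combinatorial model of the branch locus (the $(2,2,2)$ tree for $C_1\times C_2$, with $B_1\cap B_2$ in one branch, then $B_3$ contributing one new leaf to each of the three branches and $r-3$ fresh pairs) matches the paper's configuration precisely.

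Where you diverge is in the normalization of $C_3$, and this is exactly the ``main obstacle'' you flag. You keep $C_3$ as a general Artin--Schreier polynomial $\sum_{j\ \mathrm{odd}} c_{3,j}/t^j$, which forces you to match all the $c_{3,j}$ while simultaneously pinning three of the $r$ pairs at $0,\rho U_1,\rho U_2$. The paper sidesteps this entirely: invoking Remark~\ref{1position}, it puts the distinguished ``leading coefficient $1$'' on the \emph{third} equation rather than the first, so that (after the change of uniformizer) $C_3$ is simply $y_3^2-y_3=1/t^{2r-1}$. With $C_3$ a bare monomial, Pagot's Lemma~5.1.2 applies directly and, crucially, allows the base points $T_1,\dots,T_r$ to be chosen \emph{freely} subject only to $v(T_i-T_j)=0$; one then sets $T_1=0$, $T_2=U_1$, $T_3=U_2$ and Pagot produces the partners $\tilde T_i=T_i+2W_i$. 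Your version would need a genuinely stronger statement---Pagot with several coefficients prescribed \emph{and} three base points pinned---which you do not actually supply. So the gap is not in the architecture but in missing the normalization that dissolves your obstacle; once you move the ``$1$'' to $C_3$, the rest of your proof goes through verbatim.
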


\begin{figure}[h]
\centering
\begin{tikzpicture}[node distance = 1cm, scale=0.8, every node/.style={scale=0.9}]
	\tikzstyle{main node}=[draw,circle,fill=black,minimum size=3pt,
	inner sep=0pt]
	\tikzstyle{empty node} = [draw, color=white, fill=white,opacity = 0]
	
	\node[main node] (v0) [label=below:$v_0$] {};
	\node[empty node] (emp) [right of=v0] {};
	\node[empty node] (emp0) [right of =emp] {};
	\node[main node] (v1) [right of =emp0,label=below:$v_1$] {};
	\node[empty node] (emp1) [right of=v1] {};
	\node[empty node] (emp2) [right of=emp1] {};
	\node[empty node] (emp3) [right of=emp2] {};
	\node[main node] (w0) [above of =emp2,label=below:$w_0$] {};
	\node[empty node] (w02) [above of =w0] {};
	\node[main node] (w01) [left of =w02,label=left:$w_{01}$] {};
	\node[main node] (w03) [right of =w02,label=right:$w_{03}$] {};
	\node[main node] (w1) [right of =emp2,label=below:$w_1$] {};
	\node[empty node] (w12) [right of = w1] {};
	\node[main node] (w11) [above of =w12,label=above:$w_{11}$] {};
	\node[main node] (w13) [below of =w12, label=below:$w_{13}$] {};
	\node[main node] (w2) [below of =emp2,label=above:$w_2$] {};
	\node[empty node] (w22) [below of = w2] {};
	\node[main node] (w23) [right of =w22,label=right:$w_{23}$] {};
	\node[main node] (w21) [left of =w22, label=right:$w_{21}$] {};
	\node[main node] (w3) [below of =emp0, label=left:$w_3$] {};
	\node[main node] (w32) [below right of =w3,label=left:$w_{32}$] {};
	\node[main node] (w31) [below left of =w3, label=left:$w_{31}$] {};
	\node[main node] (a_0) [above of = w01, label=above: $U_1$] {};
	\node[main node] (b_0) [above of = w02, label=above: $U_1'$] {};
	\node[main node] (c_0) [above of = w03, label=above: $\tilde{U_1}$] {};
	\node[main node] (a_1) [right of = w11, label=right: $U_2$] {};
	\node[main node] (b_1) [right of = w12, label=right: $U_2'$] {};
	\node[main node] (c_1) [right of = w13, label=right: $\tilde{U_2}$] {};
	\node[main node] (a_2) [below of = w21, label=below: $U_3$] {};
	\node[main node] (b_2) [below of = w22, label=below: $U_3'$] {};
	\node[main node] (c_2) [below of = w23, label=below: $\tilde{U_3}$] {};
	\node[main node] (a_3) [below of = w31, label=below: $U_4$] {};
	\node[main node] (b_3) [below of = w32, label=below: $\tilde{U_4}$] {};
	\node[empty node] (w4) [left of =w31, label=below: $\cdots$] {};
	
	\draw (v0) -- (v1);
	\draw (v1) -- (w0);
	\draw (v1) -- (w1);
	\draw (v1) -- (w2);
	\draw (v1) -- (w3);
	\draw (w0) -- (w01);
	\draw (w0) -- (w03);
	\draw (w1) -- (w11);
	\draw (w1) -- (w13);
	\draw (w2) -- (w21);
	\draw (w2) -- (w23);
	\draw (w3) -- (w31);
	\draw (w3) -- (w32);
	\draw (w01) -- (a_0);
	\draw (w01) -- (b_0);
	\draw (w03) -- (c_0);
	\draw (w11) -- (a_1);
	\draw (w11) -- (b_1);
	\draw (w13) -- (c_1);
	\draw (w21) -- (a_2);
	\draw (w21) -- (b_2);
	\draw (w23) -- (c_2);
	\draw (w31) -- (a_3);
	\draw (w32) -- (b_3);
	
\end{tikzpicture}
\caption{Hurwitz tree with branch partition $(3,3,3,2,\ldots,2)$}
\label{3332r}
\end{figure}
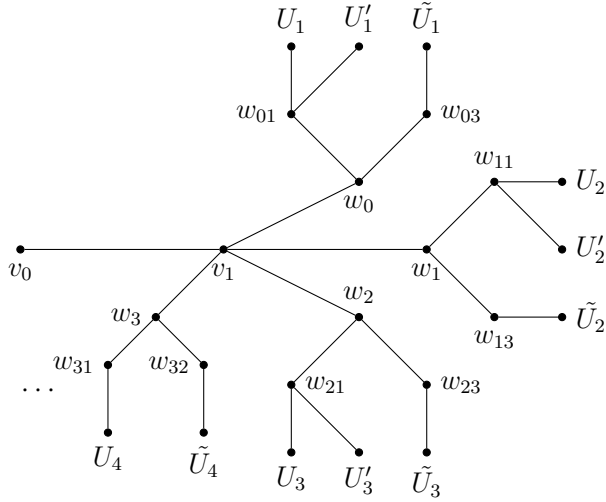

\begin{proof}
By Proposition \ref{naseq} and Remark \ref{1position}, we can assume that $k[[z]]/k[[t]]$ is defined as the composition of subcovers of the form
\begin{align*}
C_1:y_1^2-y_1&=\frac{a_1}{t^3}+\frac{b_1}{t},\\
C_2:y_2^2-y_2&=\frac{a_2}{t^3}+\frac{b_2}{t},\\
C_3:y_3^2-y_3&=\frac{1}{t^{2r-1}},
\end{align*}
where $a_1,a_2\neq 0$ are distinct. Pick a complete discrete valuation ring $R$ with residue field $k$, and enlarge $R$ if necessary. Fix $A\in R^*$ and $U_1,U_2\in R^*$ such that $-AU_i^2\equiv a_i$ mod $\pi$ and $U_i-A\in R^*$. Then by Lemma \ref{222lifting}, there exist $V_1,V_2\in R^*$, such that $$\mathcal{C}_i:Y_i^2=(1-\rho^{4r-4}AT^{-1})(1-\rho U_iT^{-1})(1-(\rho U_i+\rho^{4r-4}V_i)T^{-1})$$
is a lift of $C_i$ for $i=1,2$. Note that since $a_1\neq a_2$ and $A$ is a unit, $v(U_1-U_2)=0$.

Now let $T_1=0, T_2=U_1, T_3=U_2$, and choose $T_i, 4\leq i\leq r$, such that $v(T_i-T_j)=0$ for all $i\neq j$. Then by Lemma 5.1.2 of \cite{pagot} (see also Proposition 3.3 of \cite{onPagot}), we can define some $F(X)=\prod_{i=1}^r(X-T_i)(X-\tilde{T}_i)$ such that $v(T_i-\tilde{T}_i)=v(2)$, and $Y^2=F(X)$ has good reduction relative to the coordinate $T=\rho X$, with reduction $C_3$. Then $\tilde{T_i}=T_i+2W_i$ for some $W_i\in R^*$, and this lift $\mathcal{C}_3$ is defined by
$$Y_3^2:=((\rho/T)^rY)^2=\prod_{i=1}^r(1-\rho T_iT^{-1})(1-(\rho T_i+\rho^{2r}W_i)T^{-1}),$$
Observe that $0$ is the common branch point for all three lifts, while $\rho^{4r-4} A$ is a branch point that is shared by $\mathcal{C}_1,\mathcal{C}_2$; $\rho U_1$ is shared by $\mathcal{C}_1,\mathcal{C}_3$; and $\rho U_2$ is shared by $\mathcal{C}_2,\mathcal{C}_3$. Thus the lifts satisfy the branch cycle criterion (Theorem \ref{branchcyclecriterion}), and the normalization of the product of $\mathcal{C}_1,\mathcal{C}_2,\mathcal{C}_3$ is a lift of $k[[z]]/k[[t]]$. Let $T_1':=\rho^{4r-5}A$, $T_i':=U_i+\rho^{4r-5}V_i$ for $i=2,3$. It is straightforward to check that this configuration of branch
points is as indicated in Figure \ref{3332r}.
\end{proof}



\medskip

\noindent{\bf Author Information:}

\medskip
 
\noindent Jianing Yang\\
Department of Mathematics, University of Pennsylvania, Philadelphia, PA 19104-6395, USA\\
email: yangjianing1995@gmail.com, jianingy@sas.upenn.edu

\medskip

\noindent The research was supported in part by NSF grant DMS-2102987.

\end{document}